\documentclass[]{siamart190516}
\usepackage[margin=1.25in]{geometry} 
\usepackage{amsfonts,amssymb}
\usepackage{graphicx}
\usepackage{subcaption}
\usepackage{xfrac}
\usepackage{mathabx}
\usepackage{enumitem}
\usepackage{physics}
\usepackage{enumitem}
\usepackage{bbm}

\mathchardef\ordinarycolon\mathcode`\:
\mathcode`\:=\string"8000
\begingroup \catcode`\:=\active
\gdef:{\mathrel{\mathop\ordinarycolon}}
\endgroup
\usepackage{tikz}
\usetikzlibrary{calc,arrows}

\newcommand{\ts}{\text{  }}

\usepackage{dsfont}

\usepackage{hyperref}
\hypersetup{
	colorlinks=true,
	linkcolor=blue,
	filecolor=magenta,      
	urlcolor=cyan,
}

\title{A Numerical Scheme for Wave Turbulence: 3-Wave Kinetic Equations\thanks{The authors are  funded in part by  the  NSF RTG Grant DMS-1840260, NSF Grants DMS-1854453, DMS-2204795, DMS-2305523,    Humboldt Fellowship,   NSF CAREER  DMS-2044626/DMS-2303146.}}
\author{Steven Walton\thanks{Department of Mathematics, Southern Methodist University, Dallas, Texas 75275, USA.  Current address: XCP-4, Los Alamos National Laboratory, Los Alamos, New Mexico 87544, USA} \and Minh-Binh Tran   
\thanks{Department of Mathematics, 
	Texas A\&M University, College Station, Texas 77843, USA}    
 }
\date{}

\begin{document}
	
	\maketitle
	\begin{center}{ \small {``Dedicated to the 70th Birthday of Professor Duong Minh Duc''}}\end{center}

	\begin{abstract}
		We introduce a finite volume scheme to solve a special case of isotropic 3-wave kinetic equations. We test our numerical solution against theoretical results concerning the long time behavior of the energy and observe that our solutions verify the energy cascade phenomenon. To our knowledge, this is the first numerical scheme that can capture the long time asymptotic behavior of solutions to those isotropic 3-wave kinetic equations, where the energy cascade can be observed.  Our numerical energy cascade rates are in good agreement with previously obtained theoretical results. The finite volume  scheme given here relies on a new identity, allowing one to  reduce the number of terms needed in the collision operators.

	\end{abstract}
	
	\begin{keywords} Wave Turbulence, 3-Wave Equation, Partial Differential Equations, Finite Volume Method
	\end{keywords}
	
	\begin{AMS}
	65M08, 
	45K05, 
	76F55 
	\end{AMS}
	\section{Introduction}
	For more than 60 years, the theory of weak wave turbulence has been intensively developed. Having origins in the works of Peierls \cite{Peierls:1993:BRK,Peierls:1960:QTS}, with modern developments originating in the works of Hasselman \cite{hasselmann1962non,hasselmann1974spectral}, Benney and Saffmann \cite{benney1966nonlinear}, Kadomtsev \cite{kadomtsev1965plasma}, Zakharov \cite{zakharov2012kolmogorov},  Benney and Newell \cite{benney1969random} , wave turbulence kinetic equations have been shown to play an important role in a vast range of physical applications. Well known  examples are  water surface gravity and capillary waves, internal waves on density stratification and inertial waves due to rotation in planetary
	atmospheres and oceans, waves on quantized vortex lines in super
	fluid helium and planetary Rossby waves in
	weather and climate evolution.

	In rigorously deriving wave turbulence kinetic equations, Lukkarinen and Spohn are pioneers with the work \cite{LukkarinenSpohn:WNS:2011}.  In the last few years, the rigorous justification of wave turbulence theory has been revisited in \cite{buckmaster2016effective,collot2020derivation,deng2021full,dymov2019formal,staffilani2021wave}, in an effort to tackle the long standing open conjecture on the longtime behavior of higher order Sobolev norms, $H^s$ $(s>1)$, of  solutions to  dispersive equations on the torus.

	The 3-wave kinetic equation, one of the most important classes of  wave kinetic  equations, reads (see \cite{pushkarev1996turbulence,pushkarev2000turbulence,zakharov1968stability,zakharov1967weak,ZakharovNazarenko:DOT:2005})
	\begin{equation}\label{WeakTurbulenceInitial}
		\begin{aligned}
			\partial_tf(t,p) \ =& \ \mathcal{Q}[f](t,p), \ 
			f(0,p) \ =& \ f_0(p),
		\end{aligned}
	\end{equation}
	in which  $f(t,p)$ is the nonnegative wave density  at  wavenumber $p\in \mathbb{R}^N$, $N \ge 2$; $f_0(p)$ is the initial condition.  The quantity $\mathcal{Q}[f]$ describes pure resonance and is of the form 
	\begin{equation}\label{def-Qf}\mathcal{Q}[f](p) \ = \ \iint_{\mathbb{R}^{2N}} \Big[\mathcal R_{p,p_1,p_2}[f] -\mathcal R_{p_1,p,p_2}[f] -\mathcal R_{p_2,p,p_1}[f] \Big] d^Np_1d^Np_2 \end{equation}
	with $$\begin{aligned}
		\mathcal	R_{p,p_1,p_2} [f]:=  |V_{p,p_1,p_2}|^2\delta(p-p_1-p_2)\delta(\omega -\omega_{1}-\omega_{2})(f_1f_2-ff_1-ff_2) 
	\end{aligned}
	$$
	with the short-hand notation $f = f(t,p)$, $\omega = \omega(p)$ and $f_j = f(t,p_j),$ $\omega_j = \omega(p_j)$, for wavenumbers $p$, $p_j$, $j\in\{1,2\}$. The function $\omega(p)$ is the dispersion relation of the wave system. The 3-wave kinetic equation has a variety of applications from ocean waves, acoustic waves, gravity capillary waves to Bose-Einstein condensates and many others (see \cite{hasselmann1962non,hasselmann1974spectral,PomeauBinh,zakharov1965weak,zakharov1968stability,zakharov1967weak} and references therein).
	
	In the isotropic case, we identify  $f(t,p)$ with $f(t,\omega)$, the isotropic 3-wave kinetic equation, in which only the forward transferring part of the collision operator is considered, takes the form
	\begin{align}\label{EE1Colm}
		\begin{split}
			\partial_t f(t,\omega) \ =& \ {Q}[f](t,\omega), \ \ \ \ \omega\in\mathbb{R}_+,\
			f(0,p) \ = \ f_0(p),\\
			{{Q}}[f](t,\omega) \ =& \ \int_0^\infty\int_0^\infty \big[R(\omega, \omega_1, \omega_2)-R(\omega_1,\omega, \omega_2)-R(\omega_2, \omega_1, \omega) \big]{d}\omega_1{d}\omega_2, \\\
			& R(\omega, \omega_1, \omega_2):=  \delta (\omega-\omega_1-\omega_2)
			\left[ U(\omega_1,\omega_2)f_1f_2-U(\omega,\omega_1)ff_1-U(\omega,\omega_2)ff_2\right]\,,
		\end{split}
	\end{align}
	where $U$ satisfies $| U(\omega_1,\omega_2) |  \ = \ (\omega_1\omega_2)^{\gamma/2},$ in which $\gamma$ is a non-negative constant which plays an important role what follows. Note that the operator ${Q}[f](t,\omega)$ can be considered as a coagulation kinetic type operator for wave interactions.
	
	One of the breakthrough  results in weak turbulence theory ( see \cite{KorotkevichDyachenkoZakharov:2016:NSO,zakharov1967weak,zakharov2012kolmogorov}) concerns  the existence of the so-called  Kolmogorov-Zakharov (KZ) spectra, which is a class of {\it time-independent solutions} $f_\infty$ of equation \eqref{WeakTurbulenceInitial}:  $$f_\infty(p) \approx C|p|^{-\kappa}, \ \ \ \kappa>0.$$

 The KZ solutions are analogous to the  Kolmogorov energy spectrum 
	of hydrodynamic turbulence, though the value of $\kappa$ in the weak turbulence theory is dependent upon the wave system under consideration.  Research in line with this topic has actively continued to the present (see, for instance \cite{galtier2000weak,lvov2010oceanic,micha2004turbulent}, to name only a few). However, in absence of forcing and dissipation, the KZ solution scaling is only expected for infinite capacity systems, e.g. for a forward cascade process such systems that the energy integral diverges at infinite $p$. In the opposite case of the finite capacity systems, the spectrum blows up in a finite time.  The systems we consider in the present article are of the latter class.
	
	To our knowledge, relatively little has been done on the time-dependent solutions of \eqref{WeakTurbulenceInitial}. In the  important works \cite{connaughton2009numerical,connaughton2010aggregation,connaughton2010dynamical}, several numerical experiments were designed to  investigate the isotropic 3-wave equation. In \cite{connaughton2009numerical}, it was pointed out that isotropic 3-wave kinetic equations are equivalent to mean field rate equations for an aggregation-fragmentation problem which possesses an unusual fragmentation mechanism.  A numerical method for solving isotropic 3-wave kinetic equations, with forcing and dissipation present, was also introduced in the same work.

	In \cite{soffer2019energy}, Soffer and Tran show that, the energy conserved isotropic solutions of \eqref{WeakTurbulenceInitial}, in the finite capacity case (with $\gamma>1$), exhibit the   property that the energy is cascaded from small wavenumbers to large wavenumbers. They  show that for a regular initial condition whose energy at infinity, $\omega=\infty$,  is initially $0$, as time evolves, the energy is gradually accumulated at $\{\omega=\infty\}$. In the long time limit, all the energy of the system is concentrated at $\{\omega=\infty\}$ and the energy function becomes a Dirac function at infinity $\mathcal E\delta_{\{\omega=\infty\}}$, where $\mathcal E$ is the total energy. To be more precise, let us define the energy  of the solution  \eqref{WeakTurbulenceInitial} as $
g(t,\omega) \ = \ \omega f(t,\omega)$. It has been proved in \cite{soffer2019energy} that $g$ can be decomposed into two parts
\begin{equation}
	\label{Decomposition} 
	g(t,\omega) \ = \  \bar{g}(t,\omega) \ + \ \tilde{g}(t)\delta_{\{\omega=\infty\}},
\end{equation}
where $\bar{g}(t,\omega)\ge 0$ is the regular part, which is a  function, and $\tilde{g}(t)\delta_{\{\omega=\infty\}}$, is the singular part, which is a measure. The function $\tilde{g}(t)$ is non-negative. Initially, $\bar{g}(0,\omega)={g}(0,\omega)$ and $\tilde{g}(0)=0$. But, there exists a blow-up time $t^*_1$, such that for all time $t>t^*_1$, the function $\tilde{g}(t)$ is strictly positive. Moreover, starting from time $t^*_1$, there exists {\it infinitely many blow-up times} \begin{equation}
	\label{Decomposition1} 0< t^*_1<t^*_2<\cdots<t_n^*<\cdots,\end{equation}
such that \begin{equation}
	\label{Decomposition2}  \bar{g}(t_1^*,\omega)>\bar{g}(t_2^*,\omega)>\cdots>\bar{g}(t_n^*,\omega)> \cdots \to 0,\end{equation}
and  \begin{equation}
	\label{Decomposition3} 0<\tilde{g}(t_1^*)<\tilde{g}(t_2^*)<\cdots<\tilde{g}(t_n^*)< \cdots\end{equation}
The phenomenon has been explained in \cite{soffer2019energy} that, after the first blow-up time, $t^*_1$,
the energy starts to transfer from the regular part $\bar{g}(t,\omega)$ to the singular part $\tilde{g}(t)\delta_{\omega=\infty}$ at a rate at least like $\mathcal{O}(\frac{1}{\sqrt{t}})$, while the total energy of the two regular and singular parts is still conserved. This decay rate has been obtained in item (iii) of the main theorem of \cite{soffer2019energy} (Theorem 10, pages 22-45), which states as follows.  The energy cascade has an explicit rate
$\int_{\{|p|=\infty\}}f(t,|p|) \omega_{|p|}|p|^2d\mu(|p|)\ \ge \ \mathfrak{C}_1 \ - \  \frac{\mathfrak{C}_2}{\sqrt{t}},$
where $\mathfrak{C}_1$ and $\mathfrak{C}_2$ are explicit constants.
 Item (iii) of the main theorem states that the total energy is accumulated at the point $\{\infty\}$ with the rate $\mathcal{O}(\frac{1}{\sqrt{t}})$.
The above inequality yields with $\omega = |p|$ the equivalent
\begin{equation*}\int_{0}^R g(t,\omega)d\omega\ \	\le \   \mathcal{O}(\frac{1}{\sqrt{t}}),\mbox{ for any $R>0.$}
 \end{equation*}

In the limit that $t\to\infty$, all of the energy will be accumulated to the singular part $\tilde{g}(t)\delta_{\omega=\infty}$, while the regular part will vanish, $\bar{g}(t,\omega)\to0$. This means if we look for a strong  (non-measured) solution, whose energy is conserved, it can only exist up to a very short time $t=t^*_1,$ at which point it exhibits singular behavior. As a result, we refer to time $t^*_1$ as the {first blow-up} time.
  Let $\chi_{[0,R]}(\omega)$ be a cut-off function of $\omega$ on the finite domain $[0,R]$, the {\it multiple blow-up time } phenomenon \eqref{Decomposition}-\eqref{Decomposition1}-\eqref{Decomposition2}-\eqref{Decomposition3}, with the decay rate $\mathcal{O}(\frac{1}{\sqrt{t}})$, can be observed equivalently as the decay of the total energy on any finite interval $[0,R]$
 \begin{equation}
 	\label{Decomposition4}
 \int_{0}^R g(t,\omega)d\omega\ = \	\int_{\mathbb{R}_+}\chi_{[0,R]}(\omega)   g(t,\omega)d\omega\le \  \mathcal{O}\Big(\frac{1}{\sqrt{t}}\Big) \mbox{   as  } t\to\infty, \mbox{ for all truncated parameter } R.
 \end{equation}
Inequality \eqref{Decomposition4} simply means that the energy of the solution will move away from any truncated finite interval $[0,R]$ as $t\to\infty$ with the rate $\mathcal{O}\Big(\frac{1}{\sqrt{t}}\Big).$

We refer to \cite{soffer2019energy} for a detailed comparison between the different results of \cite{connaughton2009numerical,connaughton2010aggregation,connaughton2010dynamical,soffer2019energy}. Below, a  brief comparison will be given. In \cite{connaughton2009numerical,connaughton2010aggregation,connaughton2010dynamical}, both infinite capacity ($0\le \gamma\le1$) and finite capacity cases ($\gamma>1$) have been considered.  
The solutions of \cite{connaughton2010aggregation,connaughton2010dynamical} are assumed to follow a self-similar hypothesis, called { dynamic scaling}
\begin{equation}\label{Ansartz}
	f(t,\omega)\approx s(t)^a F\left(\frac{\omega}{s(t)}\right)
\end{equation}
where $\approx$ denotes the scaling limit $s(t)\to\infty$ and $\omega\to \infty$ with $x=\omega/s(t)$ fixed. The energy of this function can be computed as follows
\begin{equation}\label{EnergyColm}
	\begin{aligned}
		\int_{0}^\infty \omega f(t,\omega)d\omega\ = & \ \int_0^\infty s(t)^a F\left(\frac{\omega}{s(t)}\right)\omega d\omega \ =  \ \int_0^\infty s(t)^{a+2} F\left(\frac{\omega}{s(t)}\right)\left(\frac{\omega}{s(t)}\right) d\left(\frac{\omega}{s(t)}\right)\\
		\ = & \ s(t)^{a+2} \int_0^\infty x F\left(x\right) dx \backsim \mathcal{O}\Big(s(t)^{a+2}\Big),
	\end{aligned}
\end{equation}
that grows with the rate $s(t)^{a+2}$.
Substituting this ansatz into the equation \eqref{EE1Colm}, we obtain the system
\begin{equation}\label{Eigen}
	\begin{aligned}
		& \dot{s}(t) \ =  \ s^\zeta, \mbox{ with } \zeta = \gamma + a + 2\ \mbox{ and }  aF(x) \ + \ x\dot{F}(x) \ =  \ {{Q}}[F](x).
	\end{aligned}
\end{equation}
From \eqref{EnergyColm}, it can be observed  that the only value of $a$ that gives the conservation of energy is $a=-2$. Making the assumption that $F(x)\backsim x^{-n}$, when $x\backsim 0$, this work shows that the power can be determined to be $n=\gamma+1$. Since \cite{connaughton2010aggregation} focuses on the infinite capacity case, the degree of homogeneity $\gamma$ is considered in the interval $[0,1)$, as thus the integral 
\begin{equation}\label{ColmEnergy3}
\int_0^\infty xF(x) dx
\end{equation} is well-defined. However, there is a problem in the finite capacity case: when $\gamma>1$, this integral becomes singular.

The work \cite{connaughton2010dynamical}, considers both infinite capacity ($0\le \gamma\le1$) and finite capacity cases ($\gamma>1$). In the finite capacity case, the solution is considered before the first blow-up time $t<t_1^*$, theoretically proved in \cite{soffer2019energy}. However,  solving \eqref{Eigen} is a very difficult task. In \cite{connaughton2010dynamical}, a hypothesis is then needed: the   total energy of the solution of \eqref{EE1Colm} is assumed to grow linearly in time rather than being conserved \begin{equation}\label{ColmEnergy2}
	\int_{0}^\infty \omega f(t,\omega)d\omega=Jt.
\end{equation}
Another  challenging technical issue is that the integration \eqref{ColmEnergy3} with $F(x)\backsim x^{{-\frac{\gamma+3}{2}}}$ for small $x$  diverges in the finite capacity case  ($\gamma>1$) and   converges only in the infinite capacity case $(0\le \gamma\le 1)$. As thus, other assumptions need to be imposed on the solution $f$ itself. 

{\it From the above discussions, the key differences between \cite{connaughton2009numerical,connaughton2010aggregation,connaughton2010dynamical} and \cite{soffer2019energy} are summarized as follows. The works complement each other as they consider very different scenarios of the solutions of \eqref{WeakTurbulenceInitial}.  The work \cite{soffer2019energy} focuses on the finite capacity case $(\gamma>1)$, under no additional assumption on the solution $f$ of \eqref{EE1Colm} and shows that the solution, whose energy is conserved rather than linearly grows in time as assumed in \eqref{ColmEnergy2}, exhibits an energy cascade phenomenon, where there exist an infinite series of blow up times \eqref{Decomposition}-\eqref{Decomposition1}-\eqref{Decomposition2}- \eqref{Decomposition3} (or, equivalently, inequality \eqref{Decomposition4}). The result of \cite{soffer2019energy} is in good agreement with the discussion of \cite{Nazarenko:2011:WT}, saying that in the absence of external forcing and dissipation, the KZ spectrum is not expected in the finite capacity   ($\gamma>1$) case. The works \cite{connaughton2009numerical,connaughton2010aggregation,connaughton2010dynamical} focus on both cases - finite capacity   ($\gamma>1$) and    infinite capacity  $(0\le \gamma\le 1)$. In the finite capacity case ($\gamma>1$),  the solution is studied before the first blow-up time $t<t_1^*$, rigorously proved later in  \cite{soffer2019energy}. Studying the solution before the first blow-up time via the self-similar hypothesis \eqref{Ansartz} is indeed a highly challenging problem, and, as thus, in  \cite{connaughton2010dynamical} an assumption needs to be imposed on  the energy of the solution: it is  assumed to grow linearly in time (see \eqref{ColmEnergy2}) rather than being conserved
\begin{equation}\label{ColmEnergy2a}
	\int_{0}^\infty \omega f(t,\omega)d\omega=\mbox{ constant}.
\end{equation}
 Moreover, additional hypotheses are also imposed to treat the singularities of the integral \eqref{ColmEnergy3}.  }

 We would like to highlight the work \cite{bell2017self}, where a self-similar profile of the solution for a  different finite capacity system  - the Alfven wave
 turbulence kinetic equation - is computed  before the first blow-up time $t_1^*$. Also, a recently published paper \cite{semisalov2021numerical} presents a numerical method for solving the  self-similar profile before the first blow-up time $t_1^*$  for a collision integral 4-wave kinetic equation based on Chebyshev approximations.    {\it While finding self-similar profiles of the solution $f$ of \eqref{WeakTurbulenceInitial}  not only before the first blow-up time $t_1^*$ but also before the $n$-th blow-up time $t_n^*$,  is a topic of our next  work, our current manuscript only focuses on numerically verifying the existence of the multiple-blow up time $t^*_1<t^*_2<\cdots<t_n^*<\cdots$ phenomenon, as well as the bound \eqref{Decomposition4}, rigorously proved in \cite{soffer2019energy}.
}
 
 Let us also mention the work \cite{RumpfSofferTran}, where  a 3-wave kinetic equation, derived from the elastic beam wave equation on the lattice, has been analyzed. It has been shown that the domain of integration of the 3-wave collision operator is broken into disconnected domains, each has their own local equilibrium. If one starts with any initial condition, whose energy is finite on one subdomain, the solutions will relax to the local equilibrium of this subregion, as time evolves. This is the so-called non-ergodicity phenomenon, which is different from the energy cascade phenomenon observed in our current work.  The global existence of 3-wave kinetic equations in the presence of forcing has been done in \cite{GambaSmithBinh,nguyen2017quantum} and a connection to chemical reaction networks has also been pointed out in \cite{CraciunSmithBoldyrevBinh}.

Starting from \eqref{EE1Colm}, we could rewrite the 3-wave kinetic equation under the following equivalent form 
\begin{equation}\label{Final3wave}
	\begin{aligned}
		\partial_t f(t,k) \ = \ \mathbb Q[f](t,k), \ \ \ \ k\in\mathbb{R}_+, \ 
		f(0,k) = f_0(k),
	\end{aligned}
\end{equation}
in which $\mathbb Q$ is the collision operator defined by
\begin{equation}\label{WKE_Collision_operator}
	\begin{aligned}
		\mathbb{{Q}}[f](t,k) \ =& \ \int_0^k [
		a(k_1,k-k_1)f(k_1)f(k-k_1)-a(k,k_1)f(k)f(k_1)\ 
		   -a(k,k-k_1)f(k)f(k-k_1)]\mathrm{d}k_1\\
		&\ - 2\int_0^\infty [
		a(k,k_1)f(k)f(k_1)-a(k+k_1,k_1)f(k+k_1)f(k_1)\ 
		   -a(k_1+k,k)f(k)f(k_1+k)]\mathrm{d}k_1,
	\end{aligned}
\end{equation}
where the collision kernel satisfies $ a(k_1,k_2)  \ = \ (k_1 k_2)^{\gamma/2}$. {\it In the rest of our paper, we identify the variable $k$ with the variable $\omega$ as $k$ is  simply $\omega$ multiplied by a fixed constant.}   

Even though the theoretical result of \cite{soffer2019energy} holds for the more general equation \eqref{WeakTurbulenceInitial} in the isotropic case, it is expected that the result also hold for equation \eqref{EE1Colm}, in which only the part that drives the forward cascade is kept. Therefore, the goal of our paper is then to derive a finite volume scheme that allows us to observe the time evolution of the solutions of \eqref{Final3wave} and to verify the theoretical results of \cite{soffer2019energy}  for various values of $\gamma>1$. In other words, we aim to observe the transferring of energy from the regular to the singular part in \eqref{Decomposition} and to measure precisely the rate of this energy transfer process  via the inequality \eqref{Decomposition4}, that we call the energy cascade rate. {\it In the absence of forcing and dissipation, the KZ spectrum is not expected in the finite capacity   ($\gamma>1$) case considered in our current work \cite{Nazarenko:2011:WT}. }

Our finite volume scheme relies on the combination of a new energy identity represented in Lemma \ref{lemma:identity} and an adaptation of  Filbet and Lauren\c cot's scheme \cite{Fil04}   for the Smoluchowski coagulation equation to the 3-wave kinetic equation. Most numerical schemes that approximate integrals on an unbounded domain require the truncation of the unbounded domain to a finite domain.  Thanks to the new identity, the number of terms in the collision operators is reduced, which reduces the number of truncations needed in the approximation, making the scheme more accurate and reliable. Indeed, we only need to truncate one term in our numerical scheme. Let us comment   our the CFL condition is  restrictive in certain cases. For other types of  equations, the issue of positivity and accurate long-time behavior could be  resolved with using the  implicit in time discretizations. For instance, a structure preserving scheme has been designed for the Kolmogorov Fokker Planck equation, which is an equation of  degenerate parabolic type in the work \cite{FOSTER2017319}.

The advantage of degenerate parabolic type equations is that those equations are normally local while 
 the 3-wave kinetic equation considered in the current manuscript is highly non-local. As thus, the previous strategies  for parabolic equations, such that the one used for the above Kolmogorov-Fokker-Planck equation, do not immediately carry over to the current 3-wave kinetic equation. We therefore refer this question as a topic of  our forthcoming paper.


	\section{Comparison with Smoluchowski Coagulation Equation and a New Energy Identity } 
	In \cite{Fil04}, Filbet and Lauren\c cot  derive a finite volume scheme (FVS) for the Smoluchowski coagulation equation (SCE)
	\begin{equation}
		\begin{aligned}
			\partial_t f(t,k) \ = \ \mathbb Q_{Smo}[f](t,k), \ 
			f(0,k) = f_0(k),
		\end{aligned}
	\end{equation}	
	\begin{equation}\label{Smol_collision_operator}
		\begin{aligned}
			\mathbb{{Q}}_{Smo}[f](t,k) \ =& \ \int_0^k 
			a(k_1,k-k_1)f(k_1)f(k-k_1)\mathrm{d}k_1\ - 2\int_0^\infty 
			a(k,k_1)f(k)f(k_1)\mathrm{d}k_1,
		\end{aligned}
	\end{equation}
	where $a(\cdot, \cdot)$ is the collision kernel for the 3-wave collision operator (\ref{WKE_Collision_operator}).  Let us give a short derivation of the non-conservative form of the SCE (see \cite{Bak91}, \cite{daC98}, \cite{Fil04} and the references therein).
	\par 
	Take a test function  $\phi(k)=k\chi_{[0,c]}(k)$ and apply it to the SCE
	$$\int_0^c\partial_t f(t,k)k\mathrm{d}k \ = \ \int_0^c\int_0^k 
	{a(k,k-k_1)f(k_1)f(k-k_1)k}\mathrm{d}k_1\mathrm{d}k\ - 2\int_0^c\int_0^\infty 
	{a(k,k_1)f(k)f(k_1)}k\mathrm{d}k_1\mathrm{d}k$$
	
	$$=2\int_0^c\int_0^{c-k} 
	{a(k,k_1)f(k)f(k_1)k}\mathrm{d}k_1\mathrm{d}k\ - 2\int_0^c\int_0^\infty 
	{a(k,k_1)f(k)f(k_1)}k\mathrm{d}k_1\mathrm{d}k.$$
	Rearranging the right-hand side, we find 
	$$\int_0^c\partial_t f(t,k){k}\mathrm{d}k \ = \ -2\int_0^c\int_{c-k}^\infty 
	{a(k,{k}_1)f(k_1)f(k)k}\mathrm{d}k_1\mathrm{d}k,$$
	and upon taking the derivative with respect to $c$, 
	$$\partial_t f(t,c){c} \ = \ -2\partial_c \int_0^c\int_{c-k}^\infty 
	{a(k,{k}_1)f(k_1)f(k)k}\mathrm{d}k_1\mathrm{d}k,$$
	and, after truncating the inner integral, we arrive at
	\begin{equation}\label{NC_FL}
		\partial_tf(t,c){c} \ = \ -2\partial_c \int_0^c\int_{c-k}^{R} 
		{a(k,{k}_1)f(k_1)f(k)k}\mathrm{d}k_1\mathrm{d}k,
	\end{equation}
	where $R$ is a suitable truncation of the volume domain.  Then, one can apply any finite volume scheme to solve the truncated problem. We note briefly that the choice of $R$ can effect the accuracy and efficiency of the scheme for the SCE.  For example, when considering the case of gelation, $R$ must be quite large in order to avoid a loss of mass before the gelation time.  Also, in \cite{Bor13}, the authors compare the FVS above with a finite element approximation and find that for smaller truncation values, the finite element scheme is a better choice, while for larger truncation values the FVS should be used. 
	\par
	If we compare equation (\ref{WKE_Collision_operator}) with equation (\ref{Smol_collision_operator}), we see that the Smoluchowski coagulation equation is a special case of the 3-wave equation.   Thus, we would like to adapt the (FVS) of Filbet and Lauren\c cot to derive a numerical scheme for the 3-wave equation.  To do this we derive a similar identity to (\ref{NC_FL}) for the wave kinetic equation. The role of this identity is to reduce the number of terms in the collision operator. As discussed previously, the truncation of the terms in the collision operator can affect the accuracy of the numerical scheme, thus reducing the number of truncated terms is crucial in the numerical computations of the solutions, as it allows the scheme to be more accurate and reliable.

	\par 	However, for the 3-wave equation the wave density, $f(t,k)$, is not conserved, but the energy, $g(t,k)=kf(t,k)$, is conserved and so we solve for the energy.  What's more, since we do not have an analytic solution to test against, we validate our scheme by verifying  the energy cascade rate for the 3-wave equation found in Soffer and Tran \cite{soffer2019energy} and discussed above.    
	
	\begin{lemma}\label{lemma:identity}
		The following identity holds true for the energy function $g(t,k)$
		
		\begin{equation}\label{3_scheme_g}
			\begin{aligned}
				\partial_t\frac{ g(t,c)}{c}\ =  & \ -2\partial_c\int_0^c \int_0^c a(k,k_1)\frac{g(k)}{k}\frac{g(k_1)}{k_1}\chi \big\{c < k + k_1 \big\}\mathrm{d}k_1\mathrm{d}k\\
				& \ \ \ \	+ \partial_c\int_0^\infty\int_0^\infty a(k,k_1)\frac{g(k)}{k}\frac{g(k_1)}{ k_1}\chi \big\{c < k + k_1 \big\}\mathrm{d}k\mathrm{d}k_1,
			\end{aligned}
		\end{equation}
		with $\chi\big\{ \cdot \big\}$ the characteristic function and initial condition $g(0,k) = g_0(k) = kf(0,k)$.  
		
	\end{lemma}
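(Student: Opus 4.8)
Because $g(t,c)/c=f(t,c)$, the left-hand side of \eqref{3_scheme_g} is simply $\partial_t f(t,c)=\mathbb{Q}[f](t,c)$, so the lemma asserts that the six-term operator \eqref{WKE_Collision_operator} can be written as the $c$-derivative of only two double integrals. The plan is to reproduce the computation that led Filbet and Lauren\c{c}ot to \eqref{NC_FL}: integrate the equation $\partial_t f=\mathbb{Q}[f]$ in $k$ over $[0,c]$, rewrite $\int_0^c\mathbb{Q}[f]\,dk$ as a sum of double integrals over subregions of the quadrant $\mathbb{R}_+^2$ whose dependence on $c$ enters only through the outer integration limit and the constraint $\chi\{c<k+k_1\}$, and then differentiate the resulting expression in $c$.

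The algebraic engine is the symmetry $a(k,k_1)=a(k_1,k)$ together with three measure-preserving substitutions: $k_1\mapsto k-k_1$ on the local gain integral, and the shifts $k\mapsto k+k_1$ and $k_1\mapsto k_1+k$ on the two nonlocal integrals, each with unit Jacobian. After these changes of variables every term is expressed through the common weight $a(k,k_1)f(k)f(k_1)=a(k,k_1)\frac{g(k)}{k}\frac{g(k_1)}{k_1}$, which is why the kernel appears in the stated form. The reduction of terms then rests on the elementary splitting $\int_0^\infty(\cdot)\,dk_1=\int_0^k(\cdot)\,dk_1+\int_k^\infty(\cdot)\,dk_1$: applied to the nonlocal loss term $\int_0^\infty a(k,k_1)f(k)f(k_1)\,dk_1$, the tail $\int_k^\infty$ coincides, after the shift, with one of the two nonlocal gain terms, so these cancel exactly while the head $\int_0^k$ merges with the local loss. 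The local gain term, symmetrized by $k_1\mapsto k-k_1$, lives on the triangle $\{k+k_1<c\}$ and, when combined with the loss integrated over the full box $[0,c]^2$, is what turns the region constraint into the indicator $\chi\{c<k+k_1\}$ over $[0,c]^2$, while the surviving nonlocal contributions assemble into the second integral, taken over the full quadrant.

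The last step, differentiating in $c$, I would carry out by Leibniz's rule, keeping track of the two ways $c$ enters: through the outer integration limit and through $\chi\{c<k+k_1\}$, whose $c$-derivative is a Dirac mass concentrated on the line $\{k+k_1=c\}$. The delta contribution must reproduce the local gain $\int_0^c a(k_1,c-k_1)f(k_1)f(c-k_1)\,dk_1$, while the moving-limit contributions reproduce the loss terms, and matching signs and the factor $-2$ versus $+1$ in front of the two integrals is then a bookkeeping exercise. The step I expect to be the genuine obstacle is the treatment of the \emph{second} nonlocal gain term, the one that does not pair off with the loss under the splitting above: it encodes transfer from higher to lower wavenumbers, and one must verify that, consistently with retaining only the forward-transferring part of the collision operator, it is exactly the piece that does not survive into \eqref{3_scheme_g}. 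I would pin down all constants and signs, and in particular confirm the fate of this term, on the closed-form profile $f(k)=e^{-k}$ with $\gamma=0$, for which every integral above is elementary, before attempting the general argument.
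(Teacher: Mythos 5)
You follow essentially the same route as the paper: the paper also tests the equation against $\chi_{[0,c]}$ and differentiates in $c$ --- it quotes the weak identity \eqref{WKE_identity} from \cite{soffer2019energy} rather than re-deriving it, but your substitutions ($k_1\mapsto k-k_1$, the unit-Jacobian shifts, symmetry of $a$) are exactly how \eqref{WKE_identity} is proved, and your first cancellation is correct: the tail $-2\int_k^\infty a(k,k_1)f(k)f(k_1)\,\mathrm{d}k_1$ of the nonlocal loss cancels the gain $2\int_0^\infty a(k_1+k,k)f(k)f(k_1+k)\,\mathrm{d}k_1$ exactly, and the head merges with the local losses. The genuine gap is precisely the step you defer: the surviving backward-transfer gain $G(k)=2\int_0^\infty a(k+k_1,k_1)f(k+k_1)f(k_1)\,\mathrm{d}k_1$ does \emph{not} vanish, so your guiding conjecture that it is ``exactly the piece that does not survive into \eqref{3_scheme_g}'' is false, and no bookkeeping of signs will make it disappear. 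In the paper's proof this term is the $\phi(|k-k_1|)$ contribution in \eqref{WKE_identity}, and the case analysis of $K(k,k_1)$ shows it survives as the positive part of $K$, supported on $\{\max\{k,k_1\}>c\}\cap\{|k-k_1|\le c\}$ (cases iii, v, vii), with $K=0$ on $\{|k-k_1|>c\}$ (cases iv, vi). There is also a structural way to see that it cannot be conjured by Leibniz's rule from the right-hand side of \eqref{3_scheme_g}: differentiating that expression in $c$ only produces boundary contributions along the merging line $\{k+k_1=c\}$ and along the box boundary $\{\max\{k,k_1\}=c\}$ (terms proportional to $f(c)$), whereas $G(c)$ is a line integral along $\{|k-k_1|=c\}$; such a contribution appears only if the second integral carries the additional indicator $\chi\{|k-k_1|\le c\}$.

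Your own proposed sanity check settles the matter, and you should actually run it rather than leave it as a promise. With $\gamma=0$ (so $a\equiv1$) and $f(k)=e^{-k}$, the six-term operator \eqref{WKE_Collision_operator} gives $\mathbb{Q}[f](c)=ce^{-c}-3e^{-c}+4e^{-2c}$, while the two integrals in \eqref{3_scheme_g} are $T_1(c)=(c-1)e^{-c}+e^{-2c}$ and $T_2(c)=(1+c)e^{-c}$, so the stated right-hand side $-2T_1'(c)+T_2'(c)$ equals $ce^{-c}-4e^{-c}+4e^{-2c}$: the defect is exactly $e^{-c}=G(c)$. Carried out faithfully, your plan therefore lands on
\begin{equation*}
\partial_t f(t,c)\ =\ -2\,\partial_c\int_0^c\!\!\int_0^c a(k,k_1)f(k)f(k_1)\chi\big\{c<k+k_1\big\}\,\mathrm{d}k_1\mathrm{d}k\ +\ \partial_c\int_0^\infty\!\!\int_0^\infty a(k,k_1)f(k)f(k_1)\chi\big\{c<k+k_1\big\}\chi\big\{|k-k_1|\le c\big\}\,\mathrm{d}k\,\mathrm{d}k_1,
\end{equation*}
which is what the paper's own seven cases actually yield; note that the paper's passage from its case analysis to \eqref{3_scheme_f} assigns the value $+1$ on the region $\{|k-k_1|>c\}$ where its cases iv and vi computed $K=0$, so the printed \eqref{3_scheme_g} differs from \eqref{WKE_Collision_operator} by the line term $G(c)$ (and the discrete flux \eqref{C} inherits the same omission). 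In summary: same strategy as the paper and correct preliminary cancellations, but the one step you flagged as the obstacle is the crux of the lemma, your proposed resolution of it is wrong in substance, and closing the argument requires the explicit pointwise computation of $K(k,k_1)=\chi_{[0,c]}(k+k_1)+\chi_{[0,c]}(|k-k_1|)-2\chi_{[0,c]}(\max\{k,k_1\})$, not a cancellation.
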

	\begin{proof}

		Let $\phi(k)$ be a test function. From \cite{soffer2019energy}, we have the following identity for the 3-wave equation
		\begin{equation}\label{WKE_identity}
			\int_0^\infty\partial_t f(t,k)\phi(k)\mathrm{d}k \ = \ \int_0^\infty\int_0^\infty a(k,k_1)f(k)f(k_1)[\phi(k+k_1)
			+\phi(|k-k_1|)	-2\phi(\max\{k,k_1\})]\mathrm{d}k\mathrm{d}k_1.
		\end{equation}
		If we choose $\phi(k)=\chi_{[0,c]}(k)$ (compare this with $ \phi(k)=k\chi_{[0,c]}(k)$ in \cite{Fil04}), we have
		\begin{equation}\label{test_id}
			\int_0^\infty\partial_tf(t,k)\chi_{[0,c]}(k)\mathrm{d}k \ = \ \int_0^\infty\int_0^\infty a(k,k_1)f(k)f(k_1)[\chi_{[0,c]}(k+k_1)+\chi_{[0,c]}(|k-k_1|)-2\chi_{[0,c]}(\max\{k,k_1\})]\mathrm{d}k\mathrm{d}k_1.
		\end{equation}
		Set $K(k,k_1) = \chi_{[0,c]}(k+k_1)+\chi_{[0,c]}(|k-k_1|)-2\chi_{[0,c]}(\max\{k,k_1\})$.  There are seven cases:
		\begin{enumerate}[label=\roman*]
			\item Assume $k + k_1 \leq c\ts$ then we have $\chi_{[0,c]}(k + k_1) = 1$, $\chi_{[0,c]}(|k-k_1|) = 1$ and $\chi_{[0,c]}(\max\{k,k_1\}) = 1$ which implies that $K(k,k_1) = 0$.

			\item  Assume that $k + k_1 > c$ but $k\leq c$ and $k_1\leq c$ Then we have $\chi_{[0,c]}(k + k_1) = 0$, $\chi_{[0,c]}(|k-k_1|) = 1$, $\chi_{[0,c]}(\max\{k,k_1\}) = 1$ and so $K(k,k_1) = -1$.

			\item  Now let $k + k_1 > c$, $k\leq c$ but  $k_1 > c$ then $\chi_{[0,c]}(k + k_1) = 0$, $\chi_{[0,c]}(|k-k_1|) = 1$, and $\chi_{[0,c]}(\max\{k,k_1\}) = 0$ giving $K(k,k_1) = 1$.

			\item Assume $k + k_1 > c$, with $k > c$ and $k_1\leq c$ and $|k-k_1| > c$, then   $\chi_{[0,c]}(k + k_1) = 0$, $\chi_{[0,c]}(|k-k_1|) = 0$, $\chi_{[0,c]}(\max\{k,k_1\}) = 0$ leaving $K(k,k_1) = 0$.
			
			\item Assume $k + k_1 > c$, with $k > c$ and $k_1\leq c$ and $|k-k_1| \leq c$, then   $\chi_{[0,c]}(k + k_1) = 0$, $\chi_{[0,c]}(|k-k_1|) = 1$, $\chi_{[0,c]}(\max\{k,k_1\}) = 0$ leaving $K(k,k_1) = 1$.
			
			\item  Lastly, set $k + k_1 > c$, $k > c$,  $k_1 > c$, and $|k-k_1|>c$, then $\chi_{[0,c]}(k + k_1) = 0 $, $ \chi_{[0,c]}(|k-k_1|) = 0$ and $\chi_{[0,c]}(\max\{k,k_1\}) = 0$ resulting in $K(k,k_1) = 0$.

			\item  Lastly, set $k + k_1 > c$, $k > c$,  $k_1 > c$, and $|k-k_1|\leq c$, then $\chi_{[0,c]}(k + k_1) = 0 $, $ \chi_{[0,c]}(|k-k_1|) = 1$ and $\chi_{[0,c]}(\max\{k,k_1\}) = 0$ resulting in $K(k,k_1) = 1$.
		\end{enumerate}

		Applying these computations to (\ref{test_id}) and  after taking the derivative as done for (\ref{NC_FL}), we have
		
		\begin{equation}
			\begin{aligned}
				\partial_t f(t,c) \ = & \ -\partial_c\int_0^\infty\int_0^\infty a(k,k_1)f(k)f(k_1)\chi\big\{ c < k+k_1\big\}\chi\big\{k\wedge k_1 \leq c \big\}\mathrm{d}k_1\mathrm{d}k\\
				&\ \ \ \ 	+ \partial_c\int_0^\infty\int_0^\infty a(k,k_1)f(k)f(k_1)\chi\big\{c < k+k_1\big\}\chi\big\{k\wedge k_1 > c \big\}\mathrm{d}k\mathrm{d}k_1,
			\end{aligned}
		\end{equation}
		which gives,
		
		\begin{equation}\label{3_scheme_f}
			\begin{aligned}
				\partial_t f(t,c)\ =  & \ -2\partial_c\int_0^c \int_0^c a(k,k_1)f(k)f(k_1)\chi \big\{c < k + k_1 \big\}\mathrm{d}k_1\mathrm{d}k\\
				& \ \ \ \	+ \partial_c\int_0^\infty\int_0^\infty a(k,k_1)f(k)f(k_1) k_1\chi \big\{c < k + k_1 \big\}\mathrm{d}k\mathrm{d}k_1,
			\end{aligned}
		\end{equation}
		
		yielding \eqref{3_scheme_g}.
	\end{proof}
	
	With the identity \eqref{3_scheme_g} in hand, we now derive a finite volume scheme with which to solve it. Note that with \eqref{3_scheme_g}, we only need to truncate a single term, while with the original formula
	\eqref{WKE_Collision_operator}, we are  required to truncate three terms.

	\section{Finite Volume Scheme}\label{scheme_section}
	
	We give a discretization for the frequency domain for $k\in [0,R]$. Let $i\in \{1, 2, \ldots , M \} = I^M_h$, with the maximum stepsize $h \in (0,1)$ fixed.  We define the set of cells
	$\mathcal{K} = \bigcup_{i\in I^M_h} K_i = [0,R]$
	to be the discretization of the wavenumber domain into $M$ cells.  Let
	\[
	\begin{aligned}
		K_i = [k_{i-1/2}, k_{i+1/2})_{i\in I^M_h}, && \{k_i\}_{i\in I^M_h} = \frac{k_{i+1/2} +  k_{i-1/2}}{2}, && \{h_i\}_{i\in I^M_h} = k_{i+1/2} - k_{i-1/2},
	\end{aligned}
	\] 
	define the cells, pivots and step-size respectively, with the boundary nodes $k_{1/2} = 0$ and $k_{M+1/2} = R$.  Note that the discretization does not require a uniform grid, but does restrict the step-size in each cell $K_i$ so that $h_i \leq h$.  In practice, it can be useful to drop the restriction that $h\in(0,1)$, especially when using a non-uniform grid, though we leave it here to simplify the analysis.  The set $T_N = \{0,\ldots,T\} $ with $N+1$ nodes, where $T$ is the maximum time, is the discretization of the time domain.  We fix the time step to be $\Delta t = \frac{T}{N}$, and denote by $t_n = \Delta t\cdot n$ for $n\ \in \{0,\ldots,N\}$.  We approximate equation (\ref{3_scheme_g}) with 
	\begin{equation}\label{scheme}
		g^{n+1}(k_{i}) = g^n(k_{i})  +\lambda_i\Big( Q^n_{i+1/2}\Big[\frac{g}{k} \Big] - Q^n_{i-1/2}\Big[\frac{g}{k} \Big] \Big),
	\end{equation}
	where $\lambda_i = \frac{k_{i}\Delta t }{h_i}$, and 
	\[
	Q^n_{i+1/2}\Big[\frac{g}{k} \Big] - Q^n_{i-1/2}\Big[\frac{g}{k} \Big] = -2\Big(Q^n_{1,i+1/2}\Big[\frac{g}{k} \Big]-Q^n_{1,i-1/2}\Big[\frac{g}{k} \Big]\Big) + \Big(Q^n_{2,i+1/2}\Big[\frac{g}{k} \Big]-Q^n_{2,i-1/2}\Big[\frac{g}{k} \Big]\Big),
	\]
	with
	\begin{equation}\label{J}
		Q^n_{1,i+1/2}\Big[\frac{g}{k} \Big] = \sum^{i}_{m=1}h_m \frac{g^n(k_m)}{k_m} \Bigg(\sum^i_{j=1}h_j \frac{g^n(k_j)}{k_j}a(k_m, k_j)\chi\Big\{k_{i+1/2} < k_m +k_j \Big\}
		\Bigg),
	\end{equation}
	
	\begin{equation}\label{C}
		Q^n_{2,i+1/2}\Big[\frac{g}{k} \Big] =  \sum^M_{m=1}h_m \frac{g^n(k_m)}{k_m} \Bigg(\sum^M_{j=1}h_j \frac{g^n(k_j)}{k_j}a(k_m, k_j)\chi\Big\{k_{i+1/2} < k_m+k_j\Big\}\Bigg),
	\end{equation}
	where we have used the midpoint rule to approximate the integrals in equation (\ref{3_scheme_g}) and we choose an explicit time stepping method. 
	\par
	We will be interested in computing the moments of our solution. We approximate the $\ell$-th moment, $\mathcal{M}^\ell(t_n)$, by 
	\begin{equation}\label{moments}
		\mathcal{M}^\ell(t_n) = \sum^M_{i=1} h_i g^n(k_i) k^\ell_i,
	\end{equation}
	with $\ell\in\mathbb{N}$.
	\par
	The initial condition $g_0(k)$ is approximated by
	\[
	g^0(k_{i}) = \frac{1}{h_i}\int^{k_{i+1/2}}_{k_{i-1/2}}g_0(k) \mathrm{d}k \approx g_0(k_i),
	\]
	by again employing the midpoint rule.  We have that $g^0(k_i) \geq 0$ for all $i\in\{1,\ldots,M \}$ since $g_0(k) \geq 0$ for all $k\in [0, R]$ by assumption.  To ease notation in the proof of the following proposition, we write $g^n_i$ for $g^n(k_{i})$ and the negative flux as
	\begin{equation}\label{eqn::neg_flux}
	 \Big[Q^n_i\Big]^{\mp} =  Q^n_{i-1/2}\Big[\frac{g}{k} \Big]  - Q^n_{i+1/2}\Big[\frac{g}{k} \Big], 
	\end{equation}
	 so that the forward Euler scheme becomes 
    $
      g^{n+1}_i = g^n_i - \lambda_i \Big[ Q_i^n \Big]^{\mp}.
    $
	Let us analyze the collision operator \eqref{eqn::neg_flux}.  For the moment, we leave \eqref{eqn::neg_flux} in continous form but will apply the same midpoint approximation when appropriate.  Then we have 
    \begin{equation}
     \begin{aligned}
         \Big[ Q_i^n \Big]^{\mp} = \Bigg(2\int^{k_{i+1/2}}_0 \int^{k_{i+1/2}}_0 g^n_1 g^n_2 (k_1k_2)^{\gamma/2-1} \chi\Big\{k_{i+1/2} < k_1 + k_1 \Big\}\mathrm{d}k_2\mathrm{d}k_1 \\
         - \int^{R}_0 \int^{R}_0 g^n_1 g^n_2 (k_1k_2)^{\gamma/2-1}\chi\Big\{k_{i+1/2} < k_1 + k_1 \Big\}\mathrm{d}k_2\mathrm{d}k_1 \Bigg)\\
         - \Bigg(2\int^{k_{i-1/2}}_0 \int^{k_{i-1/2}}_0 g^n_1 g^n_2 (k_1k_2)^{\gamma/2-1} \chi\Big\{k_{i-1/2} < k_1 + k_1 \Big\}\mathrm{d}k_2\mathrm{d}k_1 \\
         - \int^{R}_0 \int^{R}_0 g^n_1 g^n_2 (k_1k_2)^{\gamma/2-1} \chi\Big\{k_{i-1/2} < k_1 + k_1 \Big\}\mathrm{d}k_2\mathrm{d}k_1 \Bigg)\
         = (\mathrm{I} - \mathrm{II})- (\mathrm{III} - \mathrm{IV}),
     \end{aligned}
    \end{equation}
    with $g^n_i=g^n(k_i)$ for $i = 1,2$.  We may decompose the integrals above so that 
    
\begin{equation}
     \begin{aligned}
         \mathrm{I} = 2\int^{k_{i-1/2}}_0\int^{k_{i-1/2}}_0 g^n_1 g^n_2 (k_1k_2)^{\gamma/2-1} \chi\Big\{k_{i+1/2} < k_1 + k_1 \Big\}\mathrm{d}k_2\mathrm{d}k_1  \\
         +\, 2\int^{k_{i-1/2}}_0\int^{k_{i+1/2}}_{k_{i-1/2}}g^n_1 g^n_2 (k_1k_2)^{\gamma/2-1} \chi\Big\{k_{i+1/2} < k_1 + k_1 \Big\}\mathrm{d}k_2\mathrm{d}k_1 \\
         + \, 2\int^{k_{i+1/2}}_{k_{i-1/2}}\int^{k_{i-1/2}}_0 g^n_1 g^n_2 (k_1k_2)^{\gamma/2-1} \chi\Big\{k_{i+1/2} < k_1 + k_1 \Big\}\mathrm{d}k_2\mathrm{d}k_1 \\ 
         + \, 2\int^{k_{i+1/2}}_{k_{i-1/2}}\int^{k_{i+1/2}}_{k_{i-1/2}}g^n_1 g^n_2 (k_1k_2)^{\gamma/2-1} \chi\Big\{k_{i+1/2} < k_1 + k_1 \Big\}\mathrm{d}k_2\mathrm{d}k_1  \\
         = 2\int^{k_{i-1/2}}_0\int^{k_{i-1/2}}_0 g^n_1 g^n_2 (k_1k_2)^{\gamma/2-1} \chi\Big\{k_{i+1/2} < k_1 + k_1 \Big\}\mathrm{d}k_2\mathrm{d}k_1 \\
         + \, 4\int^{k_{i+1/2}}_{k_{i-1/2}}\int^{k_{i-1/2}}_0 g^n_1 g^n_2 (k_1k_2)^{\gamma/2-1} \chi\Big\{k_{i+1/2} < k_1 + k_1 \Big\}\mathrm{d}k_2\mathrm{d}k_1 \\
         + \, 2\int^{k_{i+1/2}}_{k_{i-1/2}}\int^{k_{i+1/2}}_{k_{i-1/2}}g^n_1 g^n_2 (k_1k_2)^{\gamma/2-1} \chi\Big\{k_{i+1/2} < k_1 + k_1 \Big\}\mathrm{d}k_2\mathrm{d}k_1 ,
     \end{aligned}
    \end{equation}
    and similar decompositions may be performed on II, III and IV, which, leaving out the $g^n_1 g^n_2 (k_1k_2)^{\gamma/2-1}$ integrands to simplify the expressions, gives 
     \begin{equation}
     \begin{aligned}
         \Big[ Q_i^n \Big]^{\mp} = 2\int^{k_{i-1/2}}_0\int^{k_{i-1/2}}_0  \Big(\chi\Big\{k_{i+1/2} < k_1 + k_1 \Big\}-\chi\Big\{k_{i-1/2} < k_1 + k_1 \Big\}\Big)\mathrm{d}k_2\mathrm{d}k_1 \\
         + \, \int^{R}_0\int^{R}_0  \Big(\chi\Big\{k_{i-1/2} < k_1 + k_1 \Big\}-\chi\Big\{k_{i+1/2} < k_1 + k_1 \Big\}\Big)\mathrm{d}k_2\mathrm{d}k_1 \\
         + \, 4 \int^{k_{i+1/2}}_{k_{i-1/2}}\int^{k_{i-1/2}}_0 \chi\Big\{k_{i+1/2} < k_1 + k_1 \Big\}\mathrm{d}k_2\mathrm{d}k_1 \ 
         + \,  2 \int^{k_{i+1/2}}_{k_{i-1/2}}\int^{k_{i+1/2}}_{k_{i-1/2}} \chi\Big\{k_{i+1/2} < k_1 + k_1 \Big\}\mathrm{d}k_2\mathrm{d}k_1, \\
         = -2 \int^{k_{i-1/2}}_0\int^{k_{i-1/2}}_0 \chi\Big\{k_{i-1/2} < k_1 + k_1 \leq k_{i+1/2} \Big\} \mathrm{d}k_2\mathrm{d}k_1 \\
         + \, \int^{R}_0\int^{R}_0 \chi\Big\{k_{i-1/2} < k_1 + k_1 \leq k_{i+1/2} \Big\} \mathrm{d}k_2\mathrm{d}k_1 \\
          + \, 4 \int^{k_{i+1/2}}_{k_{i-1/2}}\int^{k_{i-1/2}}_0 \chi\Big\{k_{i+1/2} < k_1 + k_1 \Big\}\mathrm{d}k_2\mathrm{d}k_1 \ 
         + \  2 \int^{k_{i+1/2}}_{k_{i-1/2}}\int^{k_{i+1/2}}_{k_{i-1/2}} \chi\Big\{k_{i+1/2} < k_1 + k_1 \Big\}\mathrm{d}k_2\mathrm{d}k_1.
     \end{aligned}
    \end{equation}
    At this point, we apply the midpoint rule to approximate the integrals which, after making use the characteristic functions, simplifies the above to 
    \begin{gather}
     \begin{aligned}
         \Big[ Q_i^n \Big]^{\mp} \approx 
         2 g^n_i k_i^{\gamma/2-1}h _i\Big( \sum_{j = 1}^i g_j^n k_j^{\gamma/2-1}h_j
         +  \sum_{j = 1}^{i-1} g_j^n k_j^{\gamma/2-1}h_j \Big)
         - \, \sum^{i-1}_{j=1} g^n_j g^n_{i-j} (k_j k_{i-j})^{\gamma/2-1}h_j h_{i-j}.
     \end{aligned}
    \end{gather}
    In what follow, we will abuse notation slightly and right $\Big[ Q_i^n \Big]^{\mp}$ when we mean its midpoint approximation above.

    We are now in a position to state the following sufficient stability condition for the time step.
    \begin{proposition}\label{propo:positive}
     If the time step, $\Delta t$ satisfies 
     \begin{equation}\label{eqn::cfl}
        \Delta t R^{\gamma+1} \| g^0 \|_{L^\infty(0,R)} \leq \frac{\gamma}{16}\min_{i\in I^M_h}{h_i}, 
     \end{equation}
     then, for all $n \geq 0$ and $i\in I^M_h$ we have
     $ 
      g^n_i \geq 0,
     $
     and further,
     $$ \| g^{n+1}\|_{L^\infty(0,R)} \leq C(\gamma)\| g^n \|_{L^\infty(0,R)},
     $$
     with $C(\gamma)\in[\frac{15}{16},1]$ so that finally,
     \begin{equation}
         \| g^{n}\|_{L^\infty(0,R)} \leq \| g^0 \|_{L^\infty(0,R)},
     \end{equation}
     for all $n \geq 0$.  
    \end{proposition}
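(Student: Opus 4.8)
The plan is to prove both assertions at once by induction on the time level $n$, carrying the joint hypothesis that $g^n_i\ge0$ for every $i\in I^M_h$ and that $\|g^n\|_{L^\infty(0,R)}\le\|g^0\|_{L^\infty(0,R)}$. The base case $n=0$ is immediate from $g_0\ge0$ and the definition of $g^0$. The first move in the inductive step is to rewrite the update $g^{n+1}_i=g^n_i-\lambda_i[Q^n_i]^{\mp}$, using the identity $\lambda_i k_i^{\gamma/2-1}h_i=\Delta t\,k_i^{\gamma/2}$, in the form
\[
g^{n+1}_i=g^n_i\Big(1-2\Delta t\,k_i^{\gamma/2}\,\Sigma^n_i\Big)+\lambda_i\,G^n_i,
\]
where $\Sigma^n_i:=\sum_{j=1}^i g^n_j k_j^{\gamma/2-1}h_j+\sum_{j=1}^{i-1} g^n_j k_j^{\gamma/2-1}h_j$ is the loss weight and $G^n_i:=\sum_{j=1}^{i-1} g^n_j g^n_{i-j}(k_j k_{i-j})^{\gamma/2-1}h_j h_{i-j}$ is the convolution gain. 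By the induction hypothesis $\Sigma^n_i\ge0$ and $G^n_i\ge0$, so $g^{n+1}_i$ is a (possibly contracting) multiple of $g^n_i$ plus a nonnegative term, and everything reduces to controlling the loss multiplier $1-2\Delta t\,k_i^{\gamma/2}\Sigma^n_i$.

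For positivity I would bound the loss multiplier from above. Using $g^n_j\le\|g^0\|_{L^\infty}$ and $k_i\le R$ gives $2\Delta t\,k_i^{\gamma/2}\Sigma^n_i\le 4\Delta t\,R^{\gamma/2}\|g^0\|_{L^\infty}\sum_{j=1}^M k_j^{\gamma/2-1}h_j$. The remaining sum is a quadrature for $\int_0^R k^{\gamma/2-1}\,dk=\tfrac{2}{\gamma}R^{\gamma/2}$; since $k^{\gamma/2-1}$ is integrable for $\gamma>0$ (and even bounded by $R^{\gamma/2-1}$ once $\gamma\ge2$), a midpoint/convexity comparison bounds it by a constant times $\tfrac1\gamma R^{\gamma/2}$. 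The factor $\tfrac1\gamma$ produced here is precisely what the $\gamma$ in the numerator of \eqref{eqn::cfl} absorbs: substituting $\Delta t\,R^{\gamma+1}\|g^0\|_{L^\infty}\le\tfrac{\gamma}{16}\min_i h_i$ and using $\min_i h_i\le R$ drives the loss multiplier below a small fraction of $1$, so $1-2\Delta t\,k_i^{\gamma/2}\Sigma^n_i\ge0$ and hence $g^{n+1}_i\ge0$.

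For the $L^\infty$ bound I would use that, once the multiplier lies in $[0,1]$, the inequality $g^n_i\le\|g^n\|_{L^\infty}$ yields
\[
g^{n+1}_i\le\|g^n\|_{L^\infty}\big(1-2\Delta t\,k_i^{\gamma/2}\Sigma^n_i\big)+\lambda_i G^n_i,
\]
so it suffices to show that the gain is dominated by the loss, i.e. $\lambda_i G^n_i\le\theta\,\cdot 2\Delta t\,k_i^{\gamma/2}\Sigma^n_i\|g^n\|_{L^\infty}$ with some $\theta=\theta(\gamma)\le1$. This forces $\|g^{n+1}\|_{L^\infty}\le C(\gamma)\|g^n\|_{L^\infty}$ with a contraction factor $C(\gamma)\le1$, and the explicit window $[\tfrac{15}{16},1]$ then comes from feeding the CFL-controlled smallness of $2\Delta t\,k_i^{\gamma/2}\Sigma^n_i$ into the resulting expression $1-(1-\theta)\,2\Delta t\,k_i^{\gamma/2}\Sigma^n_i$. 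The comparison itself proceeds termwise: bounding $g^n_{i-j}\le\|g^n\|_{L^\infty}$ pairs the weight $k_{i-j}^{\gamma/2-1}h_{i-j}$ against $k_i^{\gamma/2-1}h_i$ and leaves behind $\sum_{j\le i-1}g^n_j k_j^{\gamma/2-1}h_j$, which is one of the two sums making up $\Sigma^n_i$.

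I expect this last comparison to be the main obstacle, and specifically in the range $1<\gamma<2$ that is of interest here. For $\gamma\ge2$ the weight $k^{\gamma/2-1}$ is nondecreasing, so $k_{i-j}\le k_i$ gives $k_{i-j}^{\gamma/2-1}\le k_i^{\gamma/2-1}$ at once and the gain is at most half the loss on a quasi-uniform grid, yielding $C(\gamma)\le1$ cleanly. For $1<\gamma<2$, however, $k^{\gamma/2-1}$ is singular at the origin and this monotonicity is reversed, so the naive termwise estimate produces a constant $\theta(\gamma)>1$ and fails to close. Closing it requires using the convolution geometry $k_j+k_{i-j}\approx k_i$ inherited from the constraint $\chi\{k_{i-1/2}<k_j+k_{i-j}\le k_{i+1/2}\}$: splitting the sum on $\{k_j\le k_{i-j}\}$ so that the larger argument is bounded below by $k_i/2$ and absorbs the singular weight, while the smaller argument reconstitutes a piece of $\Sigma^n_i$, and then verifying that the resulting $\gamma$-dependent constant, together with the smallness of the loss multiplier enforced by \eqref{eqn::cfl}, still keeps $C(\gamma)\le1$. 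Once the single-step contraction is established with $C(\gamma)\le1$, iterating gives $\|g^n\|_{L^\infty}\le C(\gamma)^n\|g^0\|_{L^\infty}\le\|g^0\|_{L^\infty}$, which closes the induction and completes the proof.
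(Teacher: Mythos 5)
Your positivity step is sound and is essentially the paper's argument: bound the loss multiplier by $4\Delta t\,k_i^{\gamma/2}\|g^0\|_{L^\infty}\sum_j k_j^{\gamma/2-1}h_j$, compare the sum with $\|k^{\gamma/2-1}\|_{L^1(0,R)}=\tfrac{2}{\gamma}R^{\gamma/2}$ (the source of the $\gamma$ in \eqref{eqn::cfl}), and invoke the CFL condition; the paper differs only cosmetically, splitting $g^n_i=\tfrac12 g^n_i+\tfrac12 g^n_i$ so that only the half-threshold $\tfrac12$ is needed, and running a provisional CFL with $\max_{\ell\le n}\|g^\ell\|_{L^\infty}$ that is closed a posteriori instead of your joint induction. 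The genuine gap is in your $L^\infty$ step. Your plan requires the domination $\lambda_i G^n_i\le\theta\cdot 2\Delta t\,k_i^{\gamma/2}\Sigma^n_i\,\|g^n\|_{L^\infty}$ with $\theta=\theta(\gamma)\le 1$, you yourself observe that the termwise estimate gives $\theta>1$ precisely in the range $1<\gamma<2$ of interest, and the repair is only sketched, never verified. Worse, the fallback you offer is logically backwards: if $\theta>1$ then $1-(1-\theta)L=1+(\theta-1)L>1$ for \emph{every} $L>0$, so the CFL-enforced smallness of the loss multiplier cannot rescue the estimate — smallness of $L$ helps only once $\theta\le1$ is already established, and that is exactly the unproved point. (Your splitting idea can in fact be completed: on $\{k_j\le k_{i-j}\}$ one has $k_{i-j}\gtrsim k_i/2$, and since the convolution gain is symmetric while $\Sigma^n_i$ contains \emph{two} copies of the partial sum $\sum_{j}g^n_jk_j^{\gamma/2-1}h_j$, careful bookkeeping yields $\theta\le 2^{1-\gamma}<1$ for $\gamma>1$ on a uniform grid — but none of this is in your write-up, and without it the induction does not close; nor do you ever produce the quantitative window $C(\gamma)\in[\tfrac{15}{16},1]$.)

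The paper avoids this obstacle entirely by never comparing gain to loss. It bounds the gain \emph{absolutely}: using the rearrangement inequality, $\frac{k_i}{h_i}\sum_{j=1}^{i-1}g^n_jg^n_{i-j}(k_jk_{i-j})^{\gamma/2-1}h_jh_{i-j}\le \frac{1}{\gamma}R^{\gamma+1}\|g^n\|^2_{L^\infty(0,R)}$, whence the CFL condition gives $\lambda_i G^n_i\le\frac{1}{16}\|g^n\|_{L^\infty(0,R)}$, and the contraction constant $C(\gamma)=\frac{17-\gamma}{16}$ is then assembled from $\varphi^n_i+\zeta^n_i$ without any $\theta$-type comparison. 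This buys a proof in which the delicate singular weight $k^{\gamma/2-1}$ near the origin never has to be compared across convolution indices — the step where your attempt stalls. (One caveat worth knowing: the paper's bound $\varphi^n_i\le\frac{8-\gamma}{16}g^n_i$ implicitly uses a lower bound of $\frac{\gamma}{16}$ on the loss term, which is not uniform in $i$ and $n$, so your instinct that this contraction step is the crux of the proposition is correct; but as submitted, your proposal leaves that crux as an acknowledged conjecture rather than a proof.)
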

    
    \begin{proof}
     Assume $R>>1$.  We will proceed by induction.  Starting from the forward Euler scheme and using $[Q^0_i]^\mp$ as computed above we obtain
    \begin{gather}
     \begin{aligned}
         g^{1}_i = g^0_i\Bigg( 1/2 - \lambda_i \Big( 2k^{\gamma/2-1}_ih_i \Big[ \sum^{i}_{j=1}g_j^0 k_j^{\gamma/2-1}h_j
         + \, \sum^{i-1}_{j=1}g_j^0 k_j^{\gamma/2-1}h_j \Big] \Big) \Bigg)\\
         + \, \frac{1}{2}g^0_i + \lambda_i\sum^{i-1}_{j=1}g^0_j g^0_{i-j}(k_j k_{i-j})^{\gamma/2-1}h_j h_{i-j}\ 
         = \varphi^0_i(g) + \zeta_i^0(g).
     \end{aligned}  
    \end{gather}
    Given $g^0_i \geq 0$, $\zeta^0_i(g)$ is clearly positive.  Then, to conclude that $g^1_i \geq 0$ it is sufficient to have $\varphi^0_i(g) \geq 0$ or equivalently 
    \begin{equation}
        \Delta t \frac{k_i}{h_i} \Big( 2k^{\gamma/2-1}_ih_i \Big[ \sum^{i}_{j=1}g_j^0 k_j^{\gamma/2-1}h_j
         + \, \sum^{i-1}_{j=1}g_j^0 k_j^{\gamma/2-1}h_j \Big] \Big) \leq \frac{1}{2}.
    \end{equation}
    To this end, note that 
    \begin{equation}
    \begin{aligned}
        {k_i} \Big( 2k^{\gamma/2-1}_ih_i \Big[ \sum^{i}_{j=1}g_j^0 k_j^{\gamma/2-1}h_j
         + \, \sum^{i-1}_{j=1}g_j^0 k_j^{\gamma/2-1}h_j \Big] \Big)\ 
         \leq 4k^{\gamma/2}_i\| g^0 \|_{L^\infty(0,R)}\|k^{\gamma/2-1}\|_{L^1(0,R)}\ 
         \leq \frac{8}{\gamma} R^{\gamma+1}\| g^0\|_{L^\infty(0,R)}.
    \end{aligned}
    \end{equation}
    For now, we provide a provisional stability condition on $\Delta t$ so that
    \begin{equation}\label{eqn::provisional_cfl}
        \Delta t R^{\gamma+1}\max_{\ell \leq n}\{\| g^\ell \|_{L^\infty(0,R)}\} \leq \frac{\gamma}{16}\min_{i\in I^M_h}{h_i},
    \end{equation}
    which will be seen to be equivalent to the condition \eqref{eqn::cfl} by the end of the proof.  
    Thus, with \eqref{eqn::provisional_cfl} setting $n=0$ we have that $\varphi^0_i(g)$ is positive.  Now, assuming $g^{n}_i \geq 0$, we proceed in the same fashion.  The steps are identical except that now we have the condition that 
    \begin{equation}\label{eqn::temp_cfl}
        \Delta t R^{\gamma+1}\| g^n \|_{L^\infty(0,R)} \leq \frac{\gamma}{16}\min_{i\in I^M_h}{h_i},
    \end{equation}
    which again is satisfied by the provisional assumption on $\Delta t$ and we have by induction that $g^{n+1}_i \geq 0$ as desired.  

    To show stability, let us consider $\zeta^n_i(g)$ and gives estimates for its second term.  To this end, note that
    \begin{equation}
        \begin{aligned}
            \frac{k_i}{h_i}\sum^{i-1}_{j=1}g^n_j g^n_{i-j}(k_j k_{i-j})^{\gamma/2-1}h_j h_{i-j}\ 
            \leq k_i \| g^n \|^2_{L^\infty(0,R)} \sum^M_{j=1}k_j^{\gamma-1}h_j\\
            \leq  \frac{k_i}{\gamma}R^\gamma \| g^n \|^2_{L^\infty(0,R)}\ 
            \leq \frac{1}{\gamma}R^{\gamma+1}\| g^n\|^2_{L^\infty(0,R)},
        \end{aligned}
    \end{equation}
    where the rearrangement inequality was used in the second line.  Then, using \eqref{eqn::provisional_cfl}, we obtain
    \begin{equation}
        \begin{aligned}
             \Delta t\frac{k_i}{h_i}\sum^{i-1}_{j=1}g^n_j g^n_{i-j}(k_j k_{i-j})^{\gamma/2-1}h_j h_{i-j} \leq \frac{1}{16}\| g^n \|_{L^\infty(0,R)} 
        \end{aligned}
    \end{equation}
    Once again using the assumption \eqref{eqn::provisional_cfl}, we see that 
    \begin{equation}
        \begin{aligned}
            g^{n+1}_i = \varphi^n_i(g) + \zeta^n_i(g) \leq g^n_i\Big(\frac{8-\gamma}{16}\Big) +\frac{1}{2}g^n_i + \frac{1}{16}\|g^n\|_{L^\infty(0,R)}, 
        \end{aligned}
    \end{equation}
    which implies that 
    \begin{equation}
        \| g^{n+1}\|_{L^\infty(0,R)} \leq \Big( \frac{17 -\gamma}{16} \Big) \| g^n\|_{L^\infty(0,R)} = C(\gamma)\| g^n\|_{L^\infty(0,R)}.
    \end{equation}
     We see that for $\gamma \in[1,2]$ we have $C(\gamma)\in [\frac{15}{16}, 1]$ and thus 
     \begin{equation}
         \| g^{n+1} \|_{L^\infty(0,R)} \leq \| g^0 \|_{L^\infty(0,R)},
     \end{equation}
     which gives the equivalence of condition \eqref{eqn::provisional_cfl} with \eqref{eqn::cfl}.  
    \end{proof}
    In the following section we provide numerical results for several initial conditions, which give evidence for the dependence of the time steps on the initial condition and truncation parameter as demonstrated above.

	\section{Numerical Tests}\label{tests}
	As mentioned above, the KZ spectrum is not expected in our equation, in absence of forcing and dissipation. Our numerical tests are therefore only designed to verify the blow up phenomenon first proved in \cite{soffer2019energy} as well as to confirm the energy cascade growth rate bound $\mathcal{O}(1/\sqrt{t})$ obtained in the same paper.  To verify this, in any finite interval of the wavenumber domain, we should see the total energy $\mathcal{E}$, or equivalently the zeroth moment of $g(t,k)$, decaying like $\mathcal{O}(\frac{1}{\sqrt{t}})$ in the interval. As  \eqref{Decomposition}-\eqref{Decomposition1}-\eqref{Decomposition2}- \eqref{Decomposition3} is  equivalent with  \eqref{Decomposition4}, our numerical tests will focus on verifying  \eqref{Decomposition4}.

	All tests were performed on a  uniform grid, with $h = 0.5$ in the first two test and $h = 0.1$ in the last two.  We solve the 3-wave equation via (\ref{3_scheme_g}) for four different initial conditions.  For each initial condition, we either vary the truncation parameter, $R$, in \eqref{Decomposition4}, and hold the degree of the collision kernel, $\gamma$, fixed or we vary $\gamma$ with $R$ kept at a fixed value. Specifically, for the first two initial conditions, we run tests for $R = 50, 100, 200$ with $\gamma =2$ fixed or $\gamma = \frac{3}{2}, \frac{9}{5}, 2$ with $R=100$ fixed.  In the last two cases, we do something similar and again run tests with $\gamma = 2$ fixed but set $R = 25, 50, 80$ and then fix $R=50$ and let $\gamma = \frac{3}{2}, \frac{9}{5}, 2$. We provide an approximation of the convergence rate for the first initial condition by comparison of approximated solutions over succesively refined grids.  In all solutions presented, we use a second-order Runge-Kutta scheme to integrate in time. 
	\par 
	The first few moments of the energy, $g(t,k)$, are then computed for each set of parameters.  We find our numerical experiments to be in good agreement with \cite{soffer2019energy}.  We note that, as with explicit methods for the Smoluchowski equation, the CFL condition can be very restrictive if one wants to maintain positivity (see \cite{Lai21}).  
	Similarly to schemes for other types of kinetic equations, maintaining positivity of the numerical solutions is also  an important issue in our scheme. 
	
	We provide a condition in Proposition \ref{propo:positive} that guarantees the positivity of the solutions. Under this condition, the positivity can be preserved when we choose $\Delta t$ sufficiently small.
	
	We observe the choice of $\Delta t$ depends on the initial data and the size of the truncated interval.  For example, in Test 1 we set $\Delta t = 0.05$ and in Test 2 we set $\Delta t = 0.005$, though we perform computations with the same truncation parameter.  Also, in Tests 3 and 4, as we increase the truncation parameter from $R=50$ to $R=80$, we must drop the time step from $\Delta t = 0.0004$ to $\Delta t = 0.00025$, respectively.  
	\par
	
	We believe that common positive preserving techniques  like those developed in \cite{hu2013positivity,huang2019positivity,huang2019third} could potentially be a solution to handle this instability issue.

	\par
	
	
	\subsection{Test 1}\label{test 1}
	
	Here we choose our initial condition to be
	\begin{equation}\label{spike}
		g_0(k) = 1.26157e^{-50(k-1.5)^2}
	\end{equation}
	with $\Delta t = 0.05$ for $t\in[0,T]$, $T= 10000$ ,  over a uniform grid, as mentioned previously, with $h = 0.5$.  The initial condition and final state, $g(T,k)$, are plotted in Figure \ref{g1_init}  and \ref{g1_final}, respectively, for $\gamma = 2$.  The theory developed in \cite{soffer2019energy}, in which it has been proven that as $t$ tends to $\infty$, the energy $g(t,k)$ converges to a delta function $\mathcal{E}\delta_{\{k=\infty\}}$, applies for $\gamma > 1$. Moreover, due to \eqref{Decomposition4}, the energy on any finite interval also goes to $0$ \begin{equation}\label{Cascade}
		\lim_{t\to\infty}g(t,k)\chi_{[0,R]}(k)=0.
	\end{equation}
	The two  Figures \ref{g1_init}  and \ref{g1_final} indeed  give strong evidence for the theoretical result
	\eqref{Cascade}, proved in \cite{soffer2019energy}.
	
\begin{figure}
\centering
\begin{subfigure}{.45\textwidth}
\includegraphics[width=\textwidth]{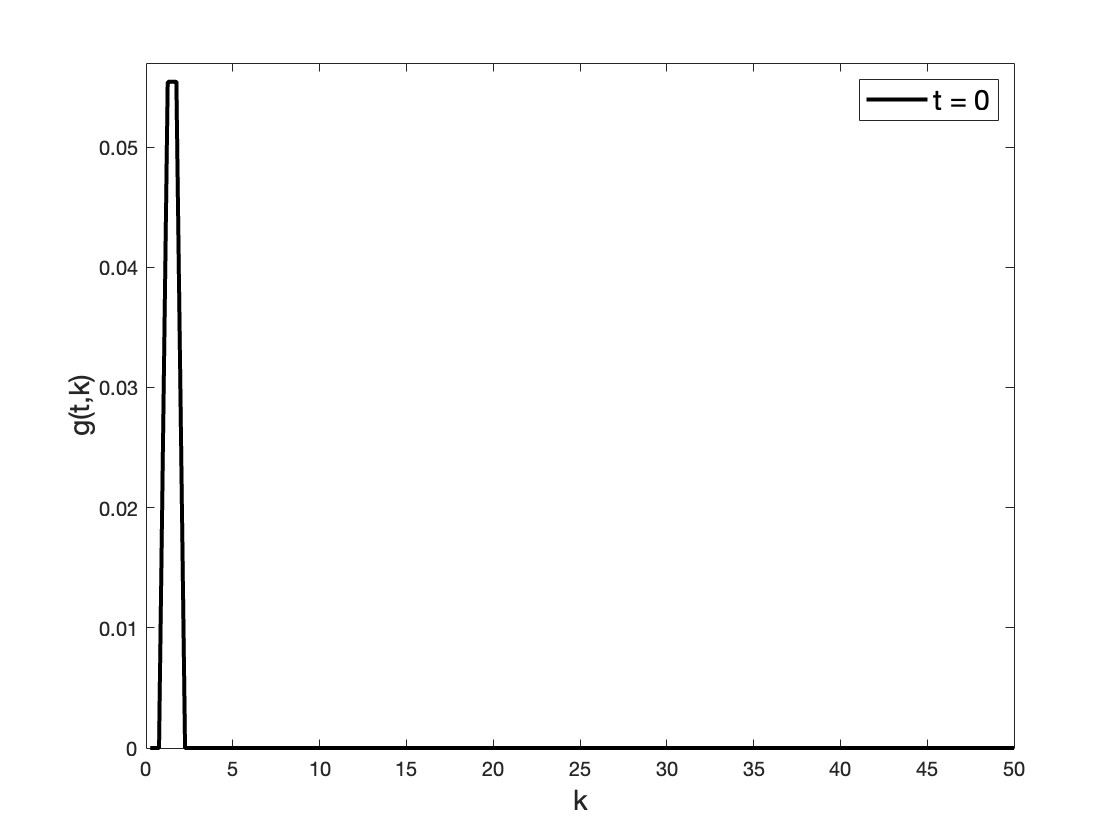}
\caption{\small Initial Condition \eqref{spike}.}\label{g1_init}
\end{subfigure}
\begin{subfigure}{.45\textwidth}
\includegraphics[width=\textwidth]{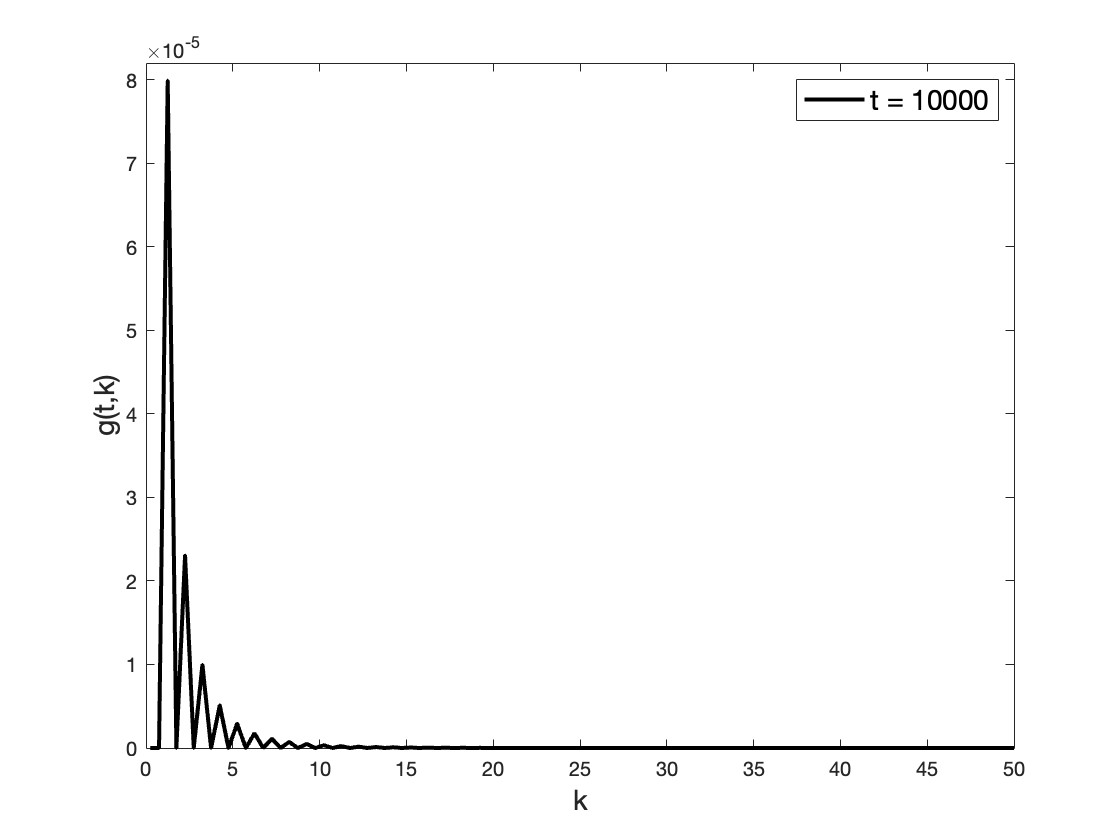}
\caption{\small Final Condition.}\label{g1_final}
\end{subfigure}
\caption{\small Test Case 1}
\label{g1}
\end{figure}

	\par
	
	\begin{figure}
		\begin{center}
			\includegraphics[scale = 0.3]{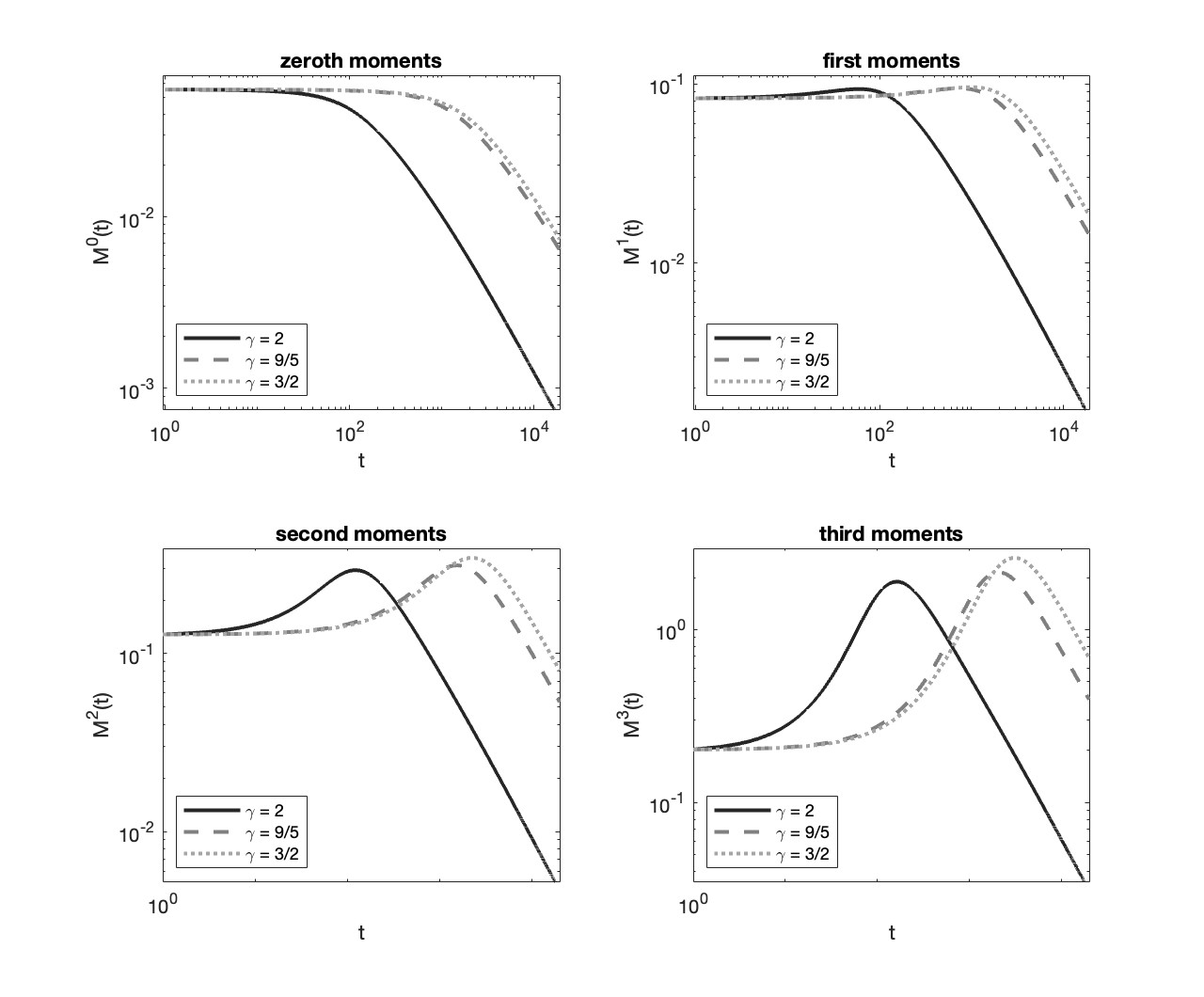}
			\caption{\small Here, we fix $R=100$ and compute the moments of the energy as a function of time for $\gamma = 3/2, 9/5\text{ and } 2$ for initial condition (\ref{spike}).}
			\label{g1_moments_gamma}
		\end{center}
	\end{figure}
	\par
	\par

	The first four moments of the energy are shown in Figure \ref{g1_moments_gamma} for $\gamma =\frac{3}{2}, \frac{9}{5}, 2$ and $k \in [0, 100]$. The numerical results in Figure \ref{g1_moments_gamma}  show that higher moments of $g$ also decay to $0$, due to \eqref{Cascade}.  We also see that the onset of decay happens later for smaller degree, $\gamma$.  For the zeroth moment, the curve corresponding to $\gamma=2$ is below the curve for $\gamma=3/2$ and the curve for $\gamma=9/5$. However, for the higher order moments, an interesting phenomenon happens.  The $\gamma = 3/2$ curve has a big jump when $t$ is around $10$ to $40$, and it lies above the other moments from that time on, while the $\gamma = 2$ curve is still below the $\gamma = 9/5$ curve at longer times.   While \eqref{Cascade} can be used to predict what happens in the Figures \ref{g1}, Figure \ref{g1_moments_gamma} indeed needs a different theoretical explanation, which should be an interesting subject of analysis in a follow-up paper. 
	\par
	
	We then investigate the moments for increasing values of the truncation parameter.  The first four moments of $g(t,k)$ are plotted in Figure \ref{g1_moments_R} with $\gamma=2$ fixed.
	For this initial condition, as might be expected, increasing the truncation parameter seems to have a negligible affect on the  higher moments.  For the other initial conditions in the test cases that follow, the difference is more distinguishable.  When we are able to make a distinction, the decay \eqref{Cascade} seems to be slower for larger values of the truncation parameter $R$, though the rate of cascade is unaffected. Moreover, when the initial condition spreads the energy across the interval, larger truncation parameters result in larger amounts of energy initially, as one might expect, but again the cascade rate is independent of the truncation value in these cases.  This observation is consistent with the findings of \cite{soffer2019energy} and can be understood as follows. As the energy moves away from the zero frequency $k=0$ and goes to the frequency $k=\infty$ as time evolves, the chance of having some energy in a finite interval $k\in[0,R]$ increases when $R$ increases. 
	\begin{figure}
		\begin{center}
	\includegraphics[scale = 0.3]{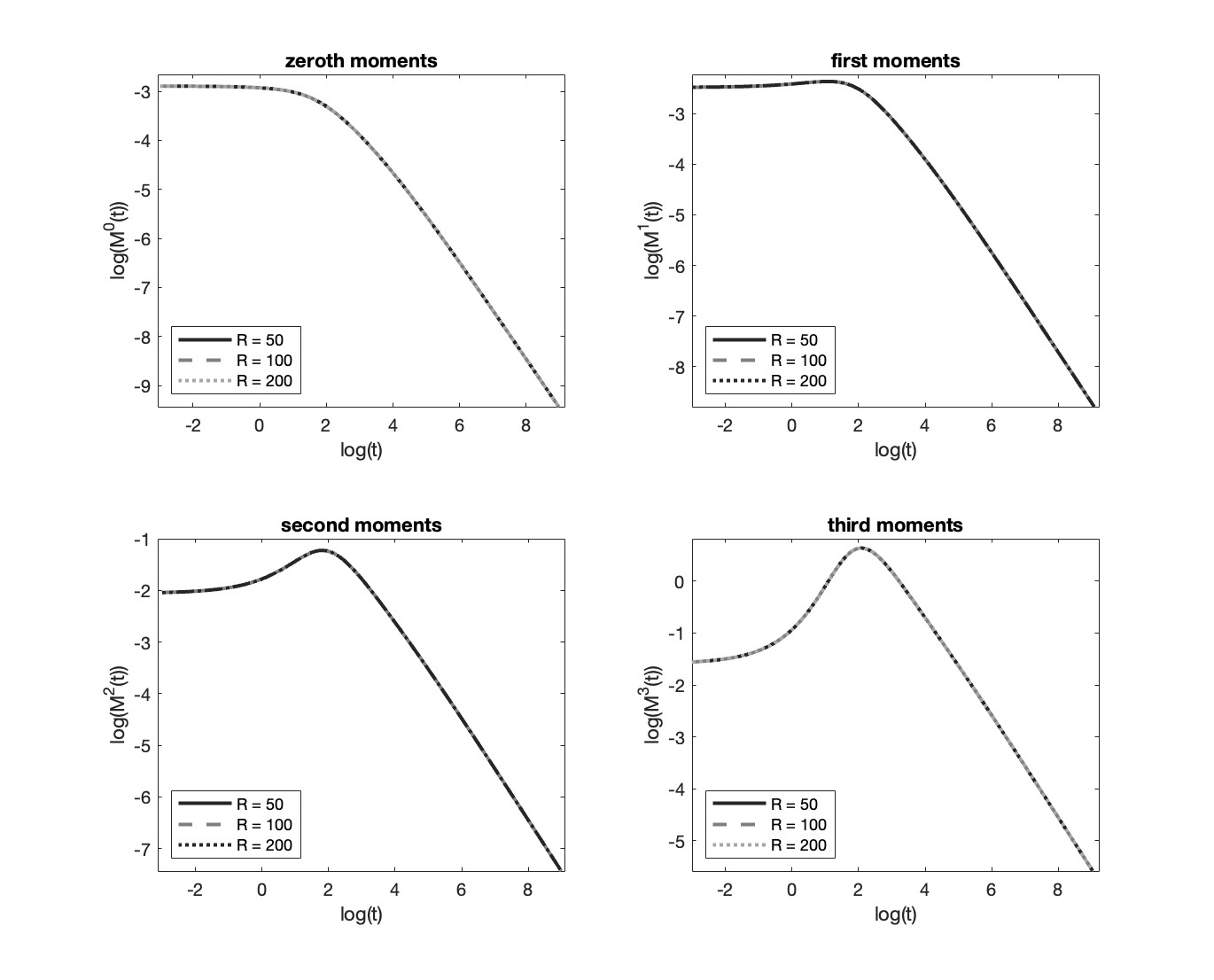}
			\caption{\small  For $\gamma = 2$ fixed, we compute the moments of the energy with initial condition (\ref{spike}) for truncation parameters $R=50, 100 ,200$.  }
			\label{g1_moments_R}
		\end{center}
	\end{figure}
    \par

 To test convergence without an exact solution, we provide two experimental order of convergence (EOC) tests.  First, we compare step sizes $h$, $h/2$ and $h/4$ for $R=50$ and $\gamma = 2$.  Here we run the simulation for $t\in[0,T_{EOC}]$ with $T_{EOC} = 25$ and $\Delta t=0.0125$ for every choice of $h$.  The approximate order of convergence, $p$, is given by \cite{leveque}
    \begin{equation}\label{EOC}
       p \approx \log_2 \Big( \frac{ \|g_h - g_{h/4}\|_{L^1}}{\|g_{h/2} - g_{h/4}\|_{L^1}} - 1 \Big) .
   \end{equation}
  The coarser solutions are interpolated using matlab's built-in piecewise cubic hermite interpolating polynomial (pchip) interpolator.  Table \ref{tbl::table} summarizes the approximation of $p$ for each $h$ at $t_{max} = \arg\max \|g_h - g_{h/4} \|_{L^1}$ .  

    \begin{table}[!h]
    \centering
       \begin{tabular}{ |c|c|c|c| } 
          \hline
          $h$ & 0.4 & 0.3 & 0.2 \\ 
          \hline
          $p$ & 2.5777 & 2.6071 & 2.6392\\ 
          \hline
        \end{tabular}
        \caption{Approximation of $p$ with formula \ref{EOC} for various $h$ values.}
        \label{tbl::table}
    \end{table}
    \par
    Next, we approximate $p$ by comparison with a fine grid solution, $g_{h^*}$, with $h^* = \frac{1}{80}$ and $\Delta t$ as before.  Then we approximate $p$ with the ratio \cite{leveque}
    \begin{equation}
        R_h(t) = \Big( \frac{ \|g_h - g_{h^*}\|_{L^1(0,R)}}{\|g_{h/2} - g_{h^*}\|_{L^1(0,R)}} \Big) .
    \end{equation}
    Then, the EOC is given by
    $ 
        p \approx \log_2 \{ R_h(T_{EOC}) \}.     
$    The results for $h=0.2$, $h=0.1$ are summarized in Table \ref{tbl::table_fine}.
    \begin{table}[!h]
    \centering
       \begin{tabular}{ |c|c|c| } 
          \hline
          $h$ & 0.2 & 0.1\\ 
          \hline
          $p$ & 2.0118 & 2.5288\\ 
          \hline
        \end{tabular}
        \caption{Approximation of $p$ with formula \ref{EOC} for various $h$ values.}
        \label{tbl::table_fine}
    \end{table}
    \par
    A log-log plot of the errors is provided in Figure \ref{fig::err_plots}.
    The verification of these experiments requires further analysis, which is the subject of a forthcoming  work.
    \begin{figure}
    \centering    
    \includegraphics[width=0.5\textwidth]{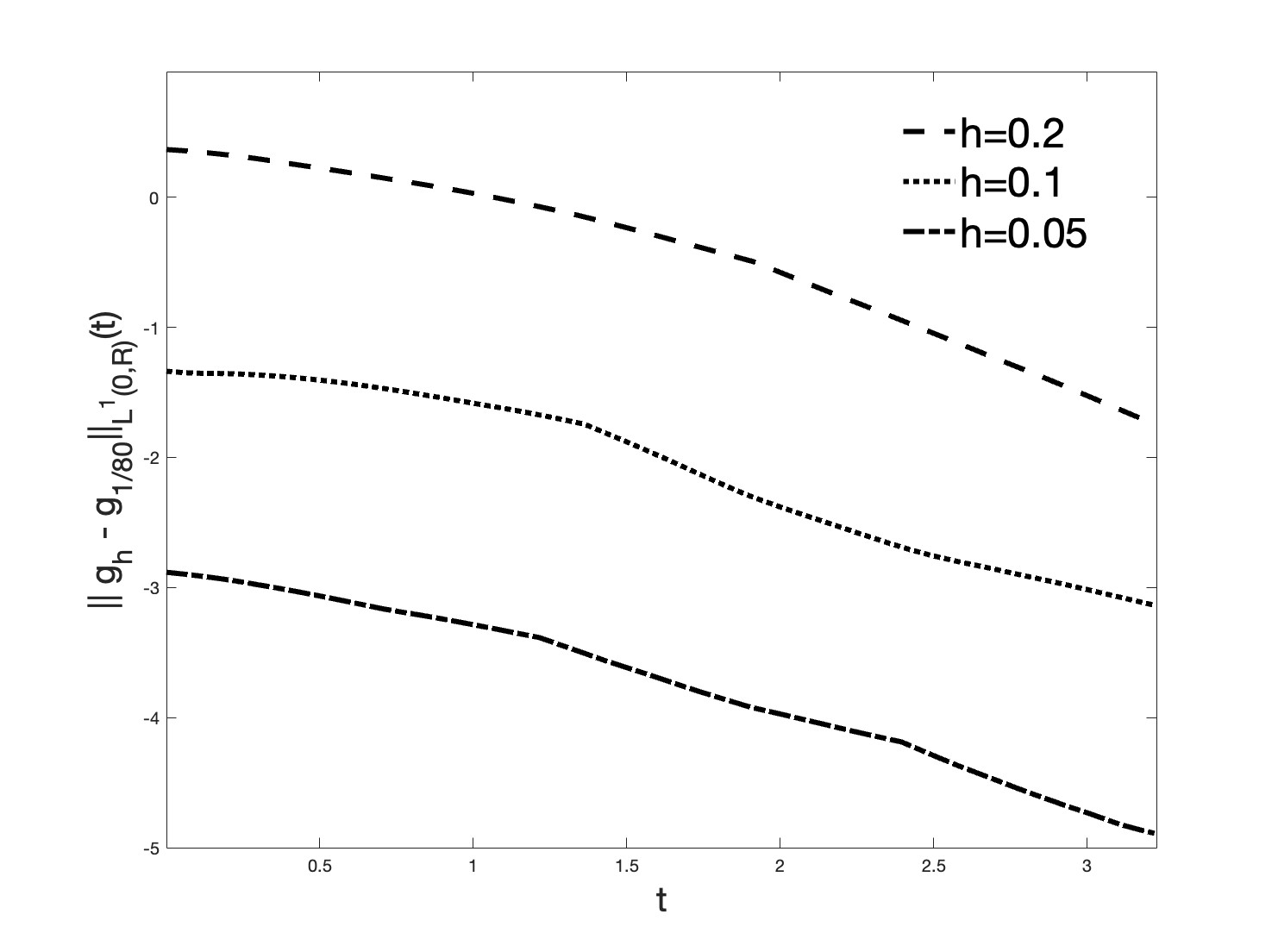}
    \caption{Log-log plot of the error between coarse and fine grid solutions.  }\label{fig::err_plots}
    \end{figure}

	\par
	
	We test the decay rate of the total energy in $[0,\infty)$ obtained in \cite{soffer2019energy} in Figures \ref{g1_decay_R} and \ref{rate_decay_test_1}.  We give a log-log plot of the zeroth moment of $g(t,k)$ for varying truncation parameter with $\gamma = 2$ and with fixed truncation parameter $R = 200$ with varying degree against the theoretical decay rate of the total energy in $[0,\infty)$.  In \cite{soffer2019energy}, it has been shown that the decay rate can be bounded by $\mathcal O\big(\frac{1}{\sqrt t}\big)$ (see \eqref{Decomposition4}), which has very good agreement with the numerical results, where the slope of the decay rate  curve in the long time limit are quite below the slope of the line corresponding to $\frac{1}{\sqrt{t}}$. We also compare with a rate like $\mathcal{O}(\frac{1}{t})$.  Here, and for the other initial conditions we consider, it would appear that the rate is like $\mathcal{O}(\frac{1}{t^s})$, with $s\in[1/2,1]$, depending on the degree and the initial condition.  Confirmation of this observation requires further analysis.      
	
		\begin{figure}[htpb]
		\centering
		\begin{subfigure}[b]{0.49\linewidth}	\includegraphics[width = \linewidth, height = 0.8\linewidth]{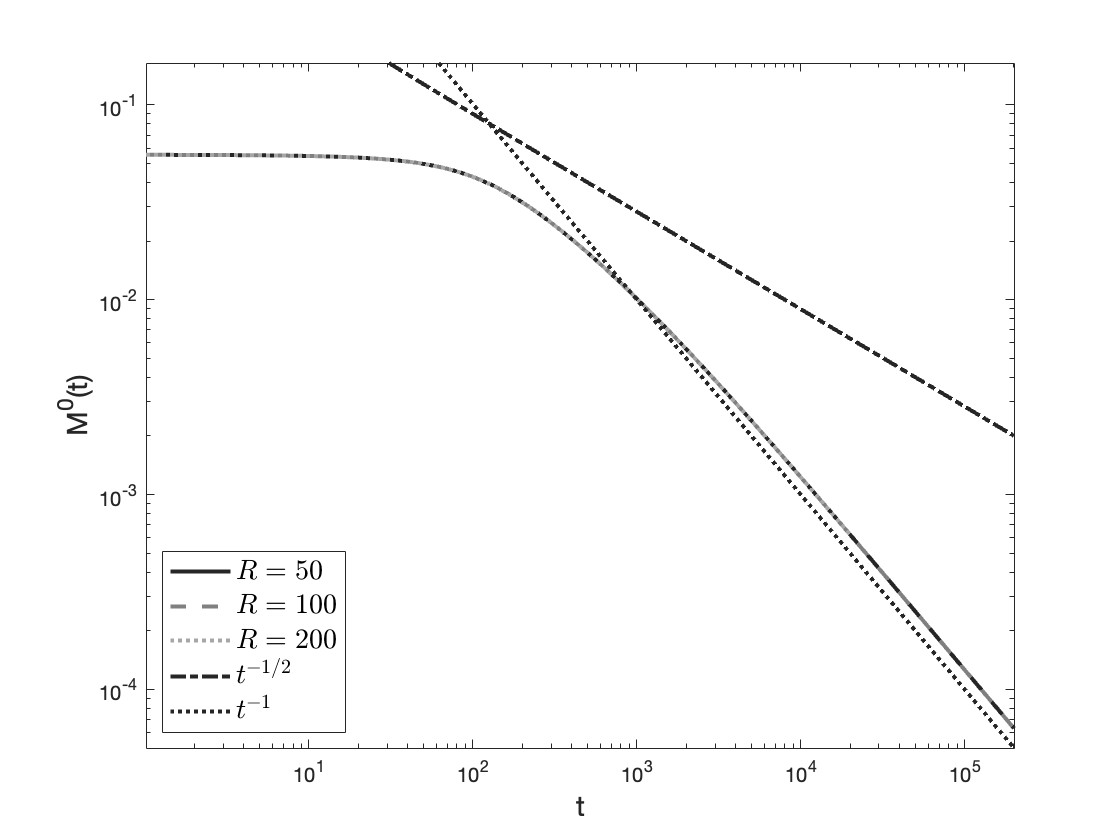}
  \caption{}
  \label{g1_decay_R}
	 \end{subfigure}
	\begin{subfigure}[b]{0.5\linewidth}
	\includegraphics[width = \linewidth, height = 0.795\linewidth]{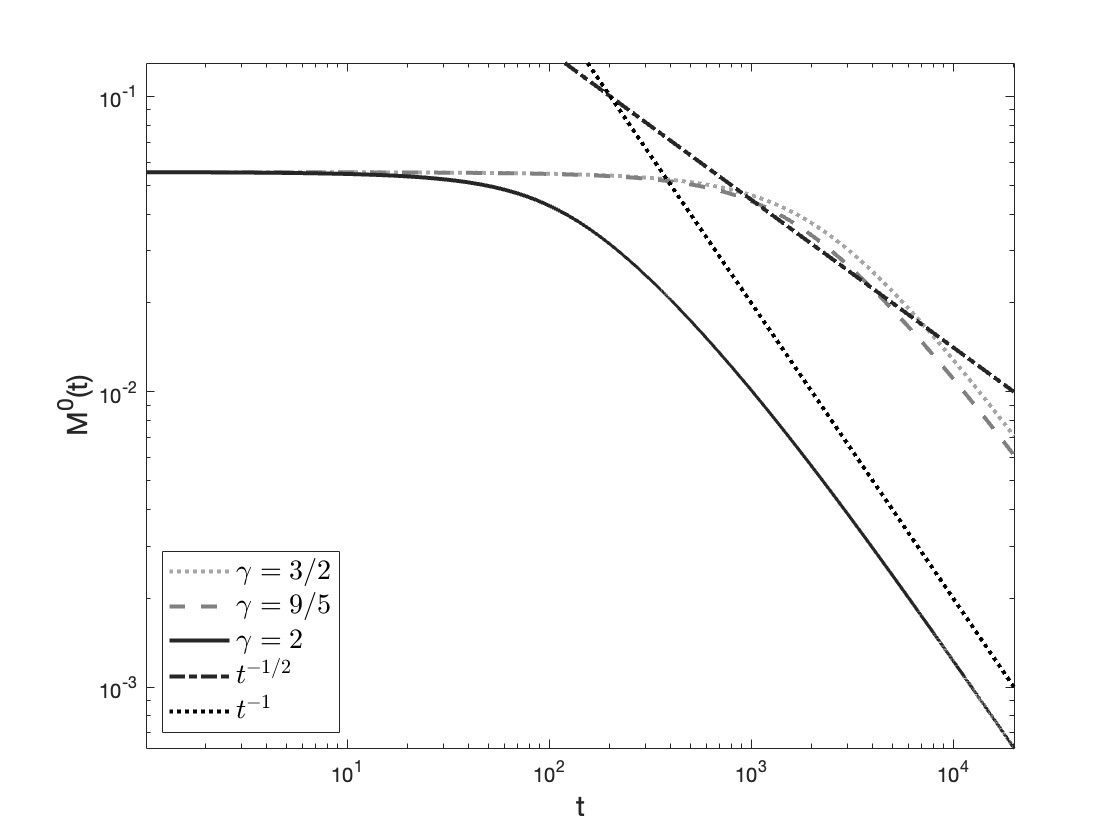}
 \caption{}
 \label{rate_decay_test_1}
	\end{subfigure}
	\caption{\small(a) Rate of decay of the total energy, corresponding to initial condition \eqref{spike} with degree $\gamma = 2$, allowing $R$ to vary. The theoretical cascade rate is shown for comparison. (b) Rate of decay of the total energy corresponding to the initial condition \eqref{spike} with theoretical cascade rate plotted for comparison.}
 \label{}
	\end{figure}
	
	\par
	We present below some preliminary observations of the so-called transient spectra, which are predicted to occur for finite capacity systems.  These spectra are different from the KZ spectra discussed above and cannot be estimated from dimensional arguments or via applying Zakharov transformations in the 3-WKE.  Briefly, the transient spectra should occur just before the singular behavior time $t^*_1$, and the solution should have the form $f_{trans}(t,k)\approx C_{trans}k^{-a}$ for $a > 0$, where, importantly, $a\neq \kappa$ for $\kappa$ the exponent of the KZ solution.  In Figure \ref{g1_trans_spectra}, we give a log-log plot of the initial condition and the solution just before the cascade process begins.  We draw a comparative line through this snapshot in time of the solution.   Lines corresponding to the KZ spectra for capillary and acoustic waves are also shown for comparison. To rigorously compute $a$, one must solve a nonlinear eigenvalue problem as in \cite{connaughton2010dynamical}.  However, as discussed above, solving such a nonlinear  eigenvalue problem is a difficult task:  in \cite{connaughton2010dynamical},    a hypothesis is imposed on the evolution of the solution. The solution is  assumed to grow linearly in time \eqref{ColmEnergy2} and  additional hypotheses are also imposed to treat the singularities of the integral \eqref{ColmEnergy3}.  While these hypotheses remain interesting mathematical questions to be verified, we  assume the nonlinear interactions to be solely responsible for the transfer of energy in our work.  We provide Figure \ref{g1_trans_spectra} as preliminary evidence that the solution seems to give a cascade behavior consistent with the predicted transient spectra, though further rigorous analysis is required for verification.  {\it To reiterate, our main concern in the present article is the behavior of $g(t,k)$ after the first blow-up time and to show the existence of the multiple blow-up time phenomenon $t_1^*<\cdots<t_n^*<\cdots$, whereas the transient cascade occurs just prior to the first blow-up time $t_1^*$. Finding self-similar profiles for the solutions before the $n$-th blow-up times, similarly to those done in \cite{bell2017self,connaughton2010dynamical,semisalov2021numerical},  is non-trivial and will be the subject of our coming work. }
	
	\begin{figure}
		\begin{center}
        \includegraphics[scale = 0.18]{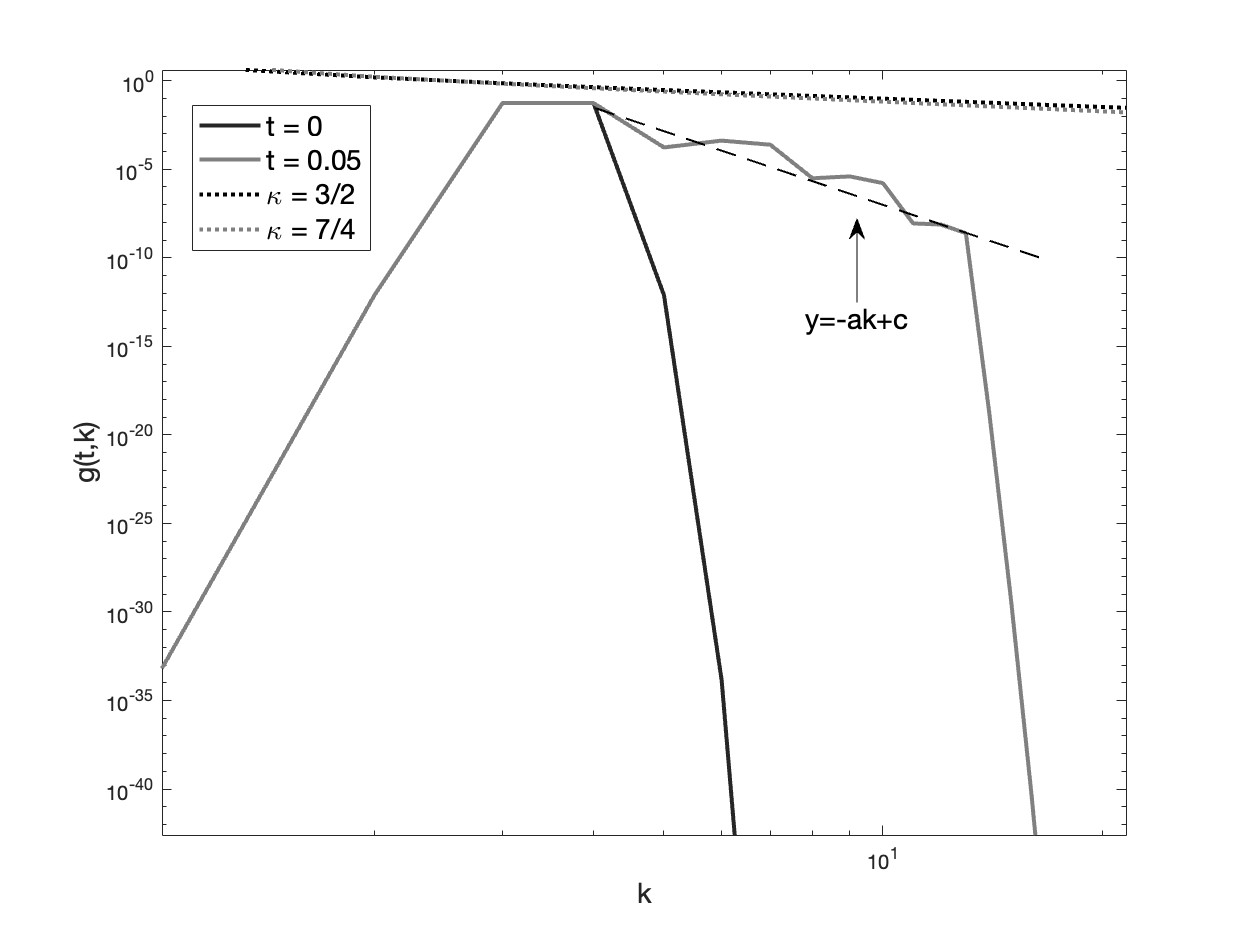}
			\caption{\small Transient cascade. }
	\label{g1_trans_spectra}
		\end{center}
	\end{figure}

	
	\subsection{Test 2}\label{test 2}
	We next choose a Gaussian further away from the origin
	\begin{equation}\label{gauss}
		g_0(k) = (5\pi)^{-1/2}e^{-(k-50/3)^2/2.5}
	\end{equation}
	as our initial condition.  Solutions are computed up to $T=10000$ with $\Delta t = 0.005$ and $h = 0.5$.  The initial condition and final state can be seen in Figure \ref{gauss_init} and \ref{gauss_final}, respectively.  This test is designed based on Test 1, as we are curious to see what happens if we move the Gaussian away from zero.
	
\begin{figure}
\centering
\begin{subfigure}[b]{.45\linewidth}
\includegraphics[width=\linewidth]{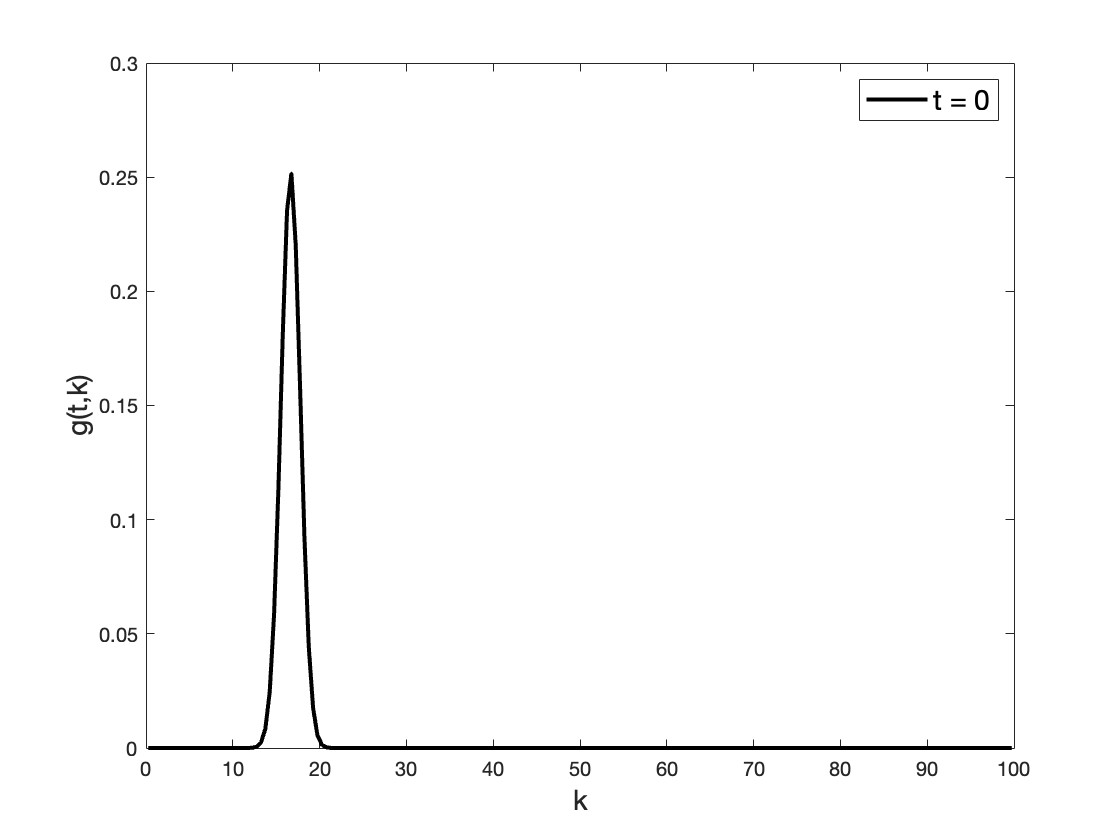}
\caption{\small Initial Condition}\label{gauss_init}
\end{subfigure}
\begin{subfigure}[b]{.45\linewidth}
\includegraphics[width=\linewidth]{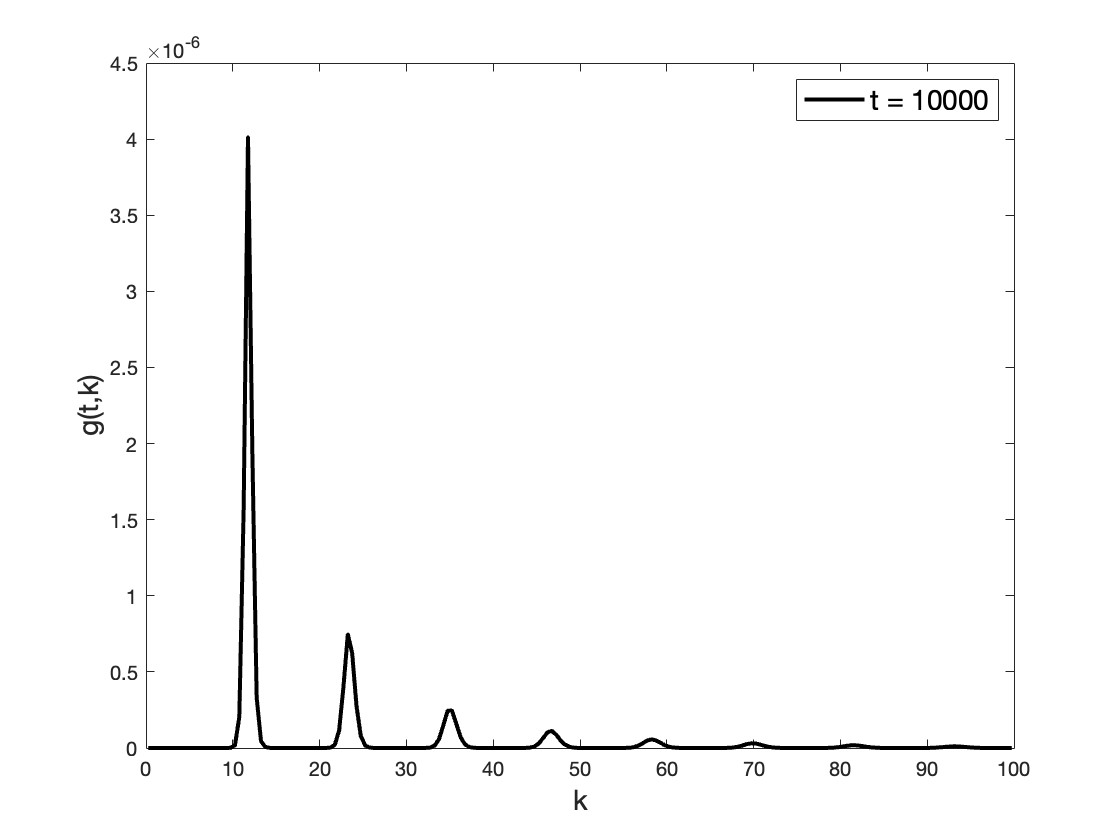}
\caption{\small Final Condition}\label{gauss_final}
\end{subfigure}
\caption{\small Test Case 2}
\label{gauss_fig}
\end{figure}

	In figures \ref{gauss_init} and \ref{gauss_final} we see that the energy is pushed slightly toward the origin at some $T_s \in [0, T)$ to $k=11.75$, away from its initial concentration at $k=16.667$ at $t=0$.  From this time $T_s$ onward, the $L^\infty$ norm of $\|g(t,k)\chi_{[0,R]}(k)\|_{L^\infty}$ decreases to $4.01\times 10^{-5}$ for $t=10000$, maintaining its concentration at $k=11.75$. This indicates  the energy cascade phenomenon does happen and happens in a very special way.  An analysis then needs to be done to explain this long time behavior of the solution. \begin{figure}
		\begin{center}
    	\includegraphics[scale = 0.3]{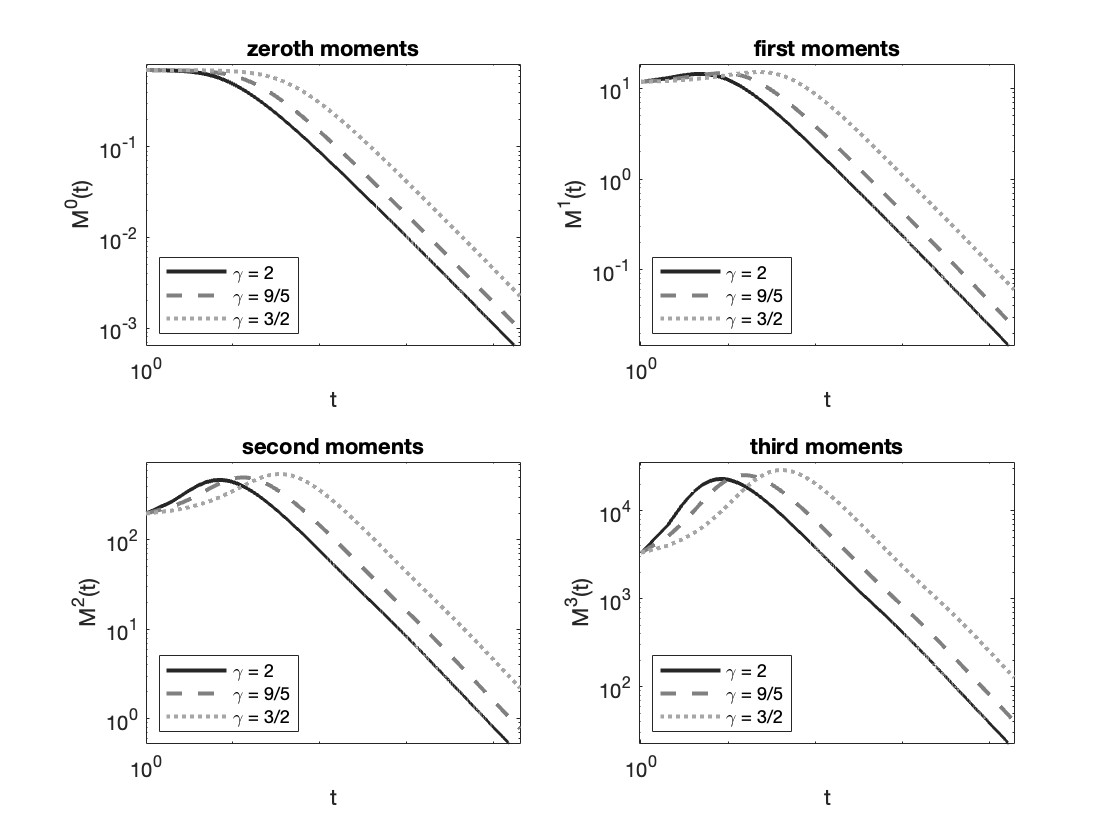}
			\caption{\small Here, we fix $R=100$ and compute the moments of the energy as a function of time for $\gamma = 3/2, 9/5, 2$ with initial condition (\ref{spike}).}
			\label{gauss_moments_gamma}
		\end{center}
	\end{figure}
	\par 
	In Figure \ref{gauss_moments_gamma}, we give results for the computation of the first few moments of the energy when allowing the degree to vary while holding the truncation value fixed.  We notice a similar behavior as in the previous test case.  
	\par
	The moment calculations are performed with $\gamma = 2$ fixed and varying truncation parameter.  The results are plotted in Figure \ref{gauss_moments_R}. In contrast to Figure \ref{g1_moments_R}, the difference in moments is more distinguishable.  However, as already mentioned, this is consistent with the previous analysis in \cite{soffer2019energy}, being that the chance of finding energy in a larger frequency interval is higher.
	
	\begin{figure}
		\begin{center}
		\includegraphics[scale = 0.3]{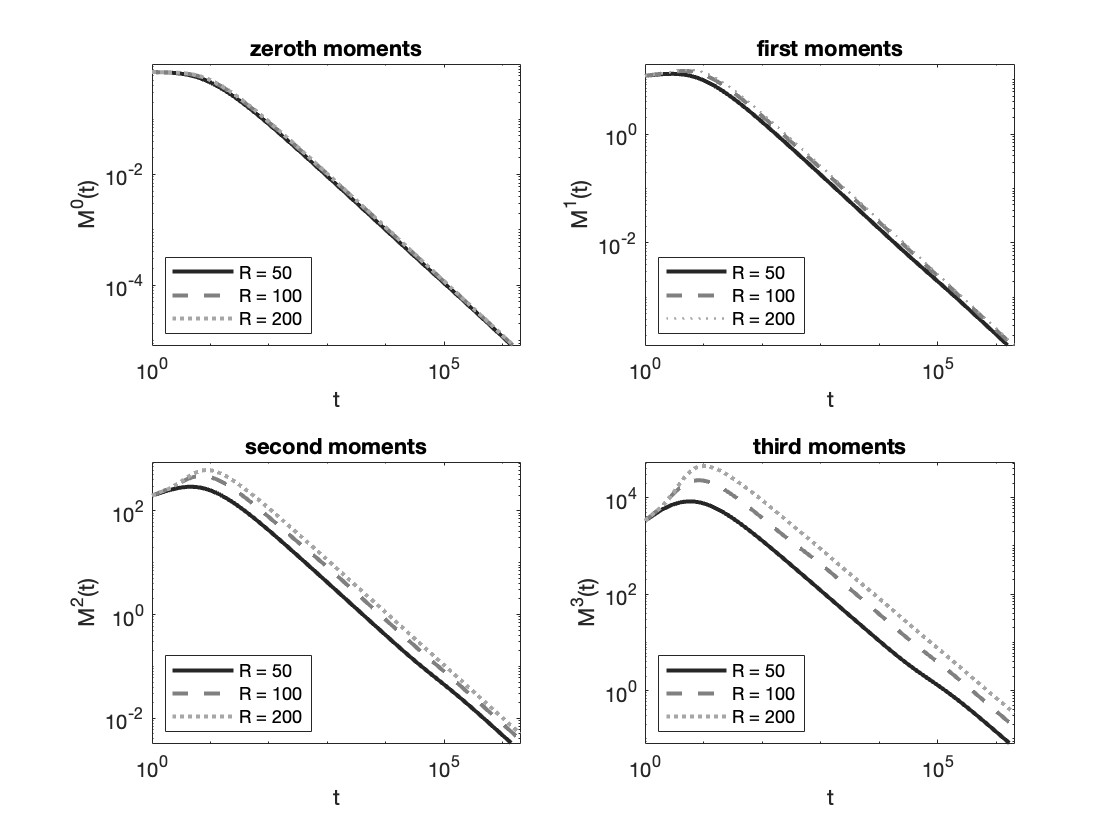}
   \caption{\small Moments of $g(t,k)$ with initial condition (\ref{gauss}) and fixed degree, $\gamma = 2$, and varying truncation parameter.}\label{gauss_moments_R}		
	\end{center}
	\end{figure}
	\par 
	
	The theoretical decay rate is compared with the decay of total energy for all considered values of $\gamma$ and $R=100$ in Figure \ref{decay_rate_test_2}. As in Test 1, the numerical results  have a good agreement with the theoretical findings of \cite{soffer2019energy}. It  appears that the decay is more like $\mathcal{O}(\frac{1}{t})$, which is bounded by $\mathcal{O}(\frac{1}{\sqrt{t}})$ as shown in \eqref{Decomposition4}. 
	\begin{figure}
    \centering
    	\begin{subfigure}[b]{.49\linewidth}
	\includegraphics[width = \linewidth]{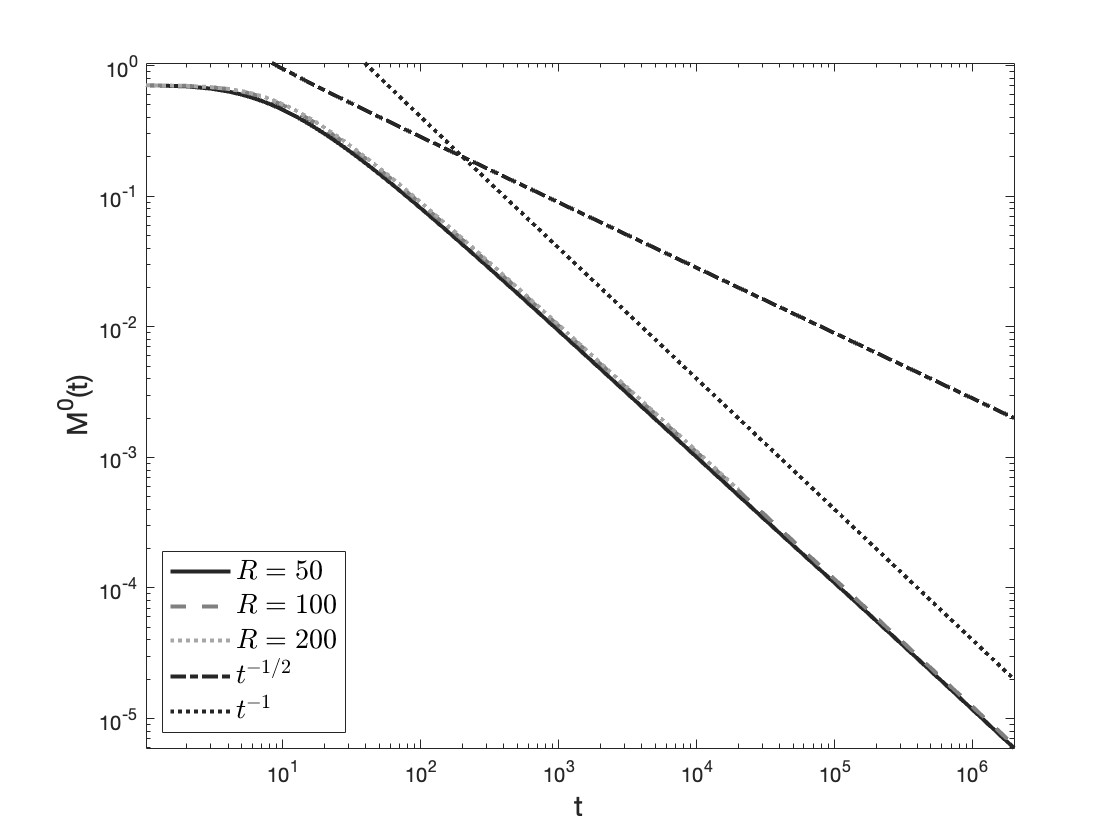}\caption{}\label{gauss_total_energy_R}
	\end{subfigure}
    \begin{subfigure}[b]{.49\linewidth}
	\includegraphics[width = \linewidth]{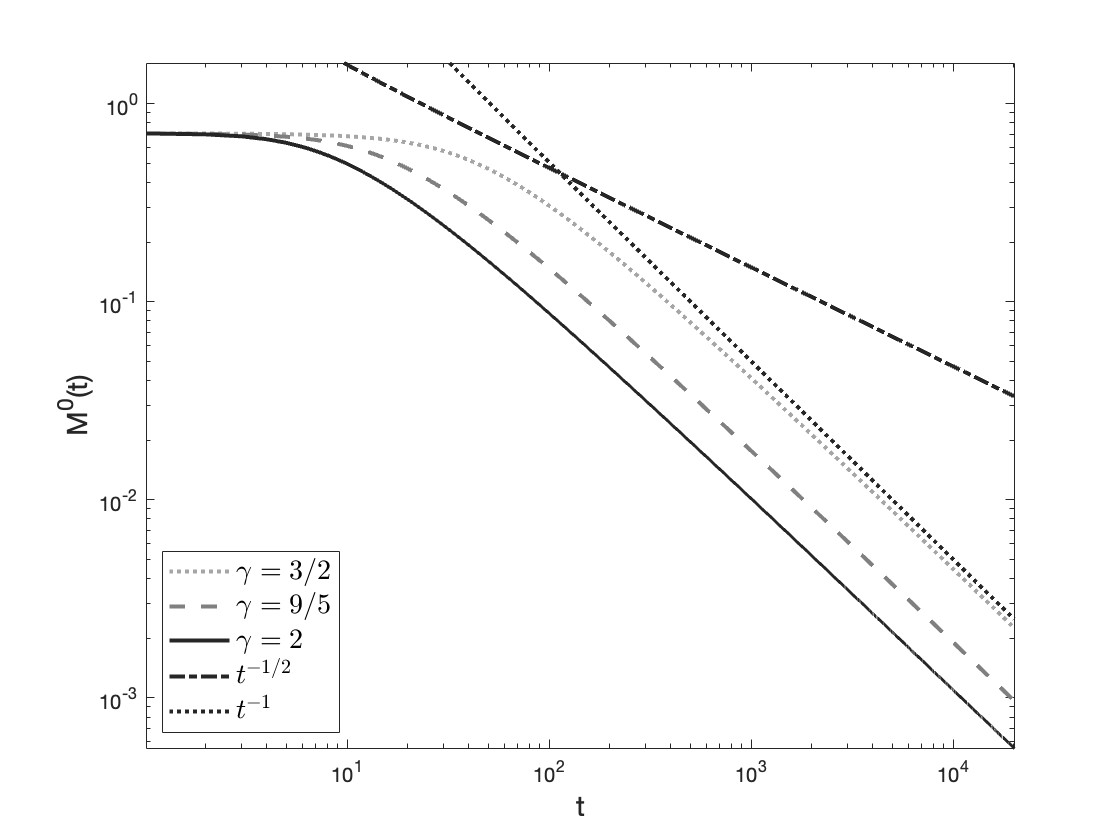}
	\caption{}\label{decay_rate_test_2}
	\end{subfigure}
	\caption{\small (a) Rate of decay of the total energy with varying truncation parameter corresponding to the initial condition shown in figure \ref{gauss_init}. (b) Rate of decay of the total energy corresponding to the initial condition shown in figure \ref{gauss_init} with varying degree and theoretical rate plotted for comparison.}
	\end{figure}
	We also compare the theoretical cascade rate to the decay of total energy for each interval considered in Figure \ref{gauss_total_energy_R}. 

	We see that increasing the truncation parameter has no influence on the cascade rate and blow-up times, consistent with the theory found in \cite{soffer2019energy}.  We notice that here and in the previous test case, that there is also not a distinguishable difference (if any) in the amount of energy contained in the intervals after varying the truncation parameter, which will contrast with the next two test cases, where much more energy is available initially.

As in the previous test, we provide possible evidence for the transient spectra, and compare with the known KZ spectra of the relevant systems.  The results are shown in Figure \ref{gauss_trans_spectra}.
	
	\begin{figure}
		\begin{center}
        \includegraphics[scale = 0.18]{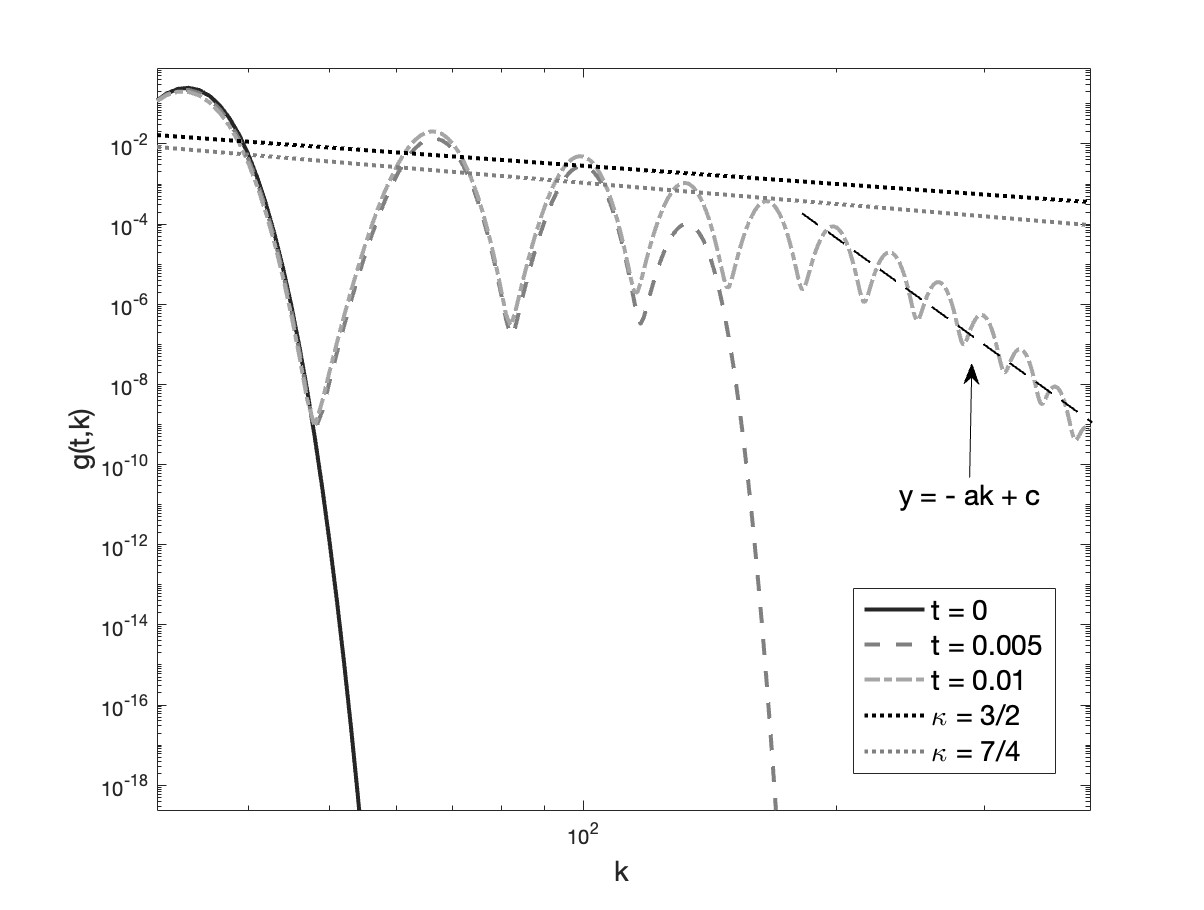}
			\caption{\small Transient cascade corresponding to the initial condition \ref{gauss}. }
	\label{gauss_trans_spectra}
		\end{center}
	\end{figure}

	
	\subsection{Test 3}\label{test3}
	Here, we consider  initial data given by 
	\begin{equation}\label{test_3_init}
		\begin{aligned}
			g_0(k) = 
			\begin{cases}
				1 & k\in \big[2n\pi, (2n+1)\pi\big]\\
				0 & k\in \big((2n+1)\pi , 2(n+1)\pi\big) \\
			\end{cases} & \text{ for } n =0,1,3,5,\ldots
		\end{aligned}
	\end{equation}
	and perform test for $t\in [0, T]$ for $T=100$ and $\Delta t = 0.0004$ when $R=25$ and $R = 50$ and $\Delta t = 0.00025$ for $R= 80$.  The frequency step is $h = 0.1$ for each interval $[0,R]$ considered. 
	
	\par
	
	In Figure \ref{square_init} we show the initial condition and in Figure \ref{square_final} the final state at $T=100$. In the final state, it would appear that the remaining energy in the interval is collected near $k=0$ with decreasing maximum amplitude at $k=0.05$. It seems that this profile is maintained as $T\rightarrow \infty$, in a similar fashion to Tests 1 and 2.  
	
\begin{figure}
\centering
\begin{subfigure}[b]{.45\linewidth}
\includegraphics[width=\linewidth]{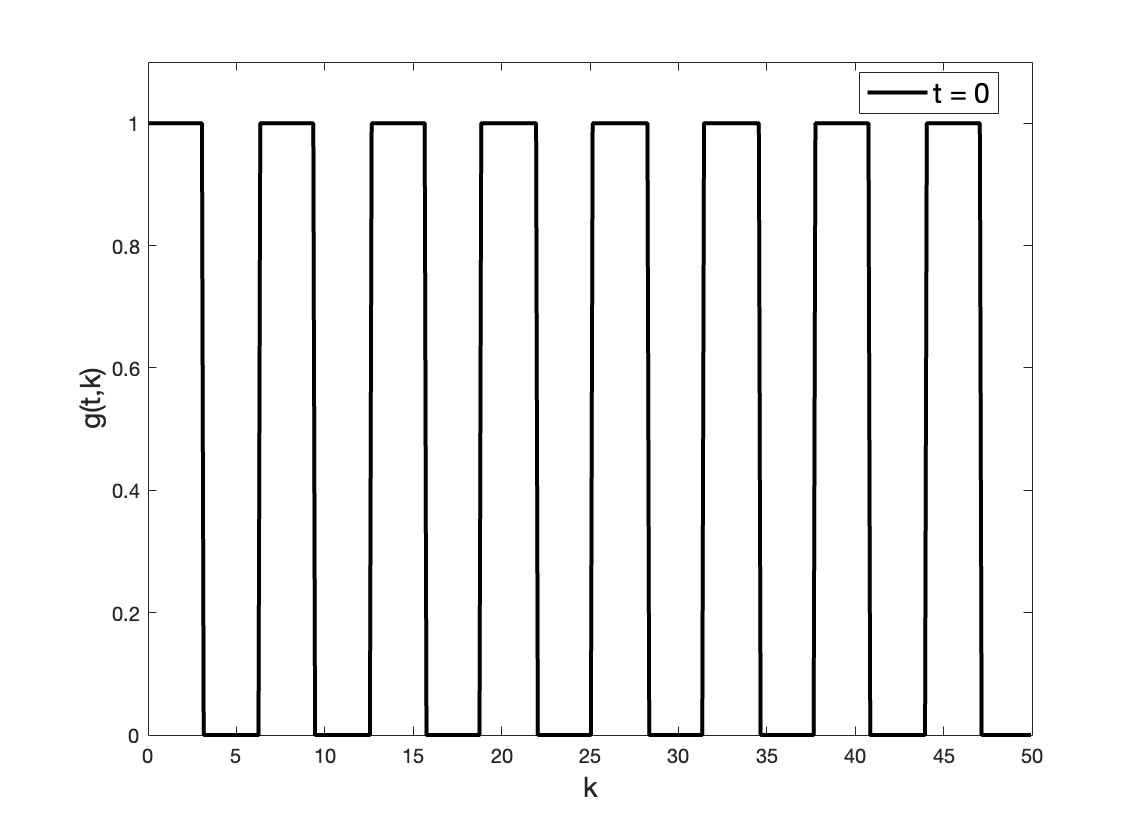}
\caption{\small Initial Condition}\label{square_init}
\end{subfigure}
\begin{subfigure}[b]{.45\linewidth}
\includegraphics[width=\linewidth]{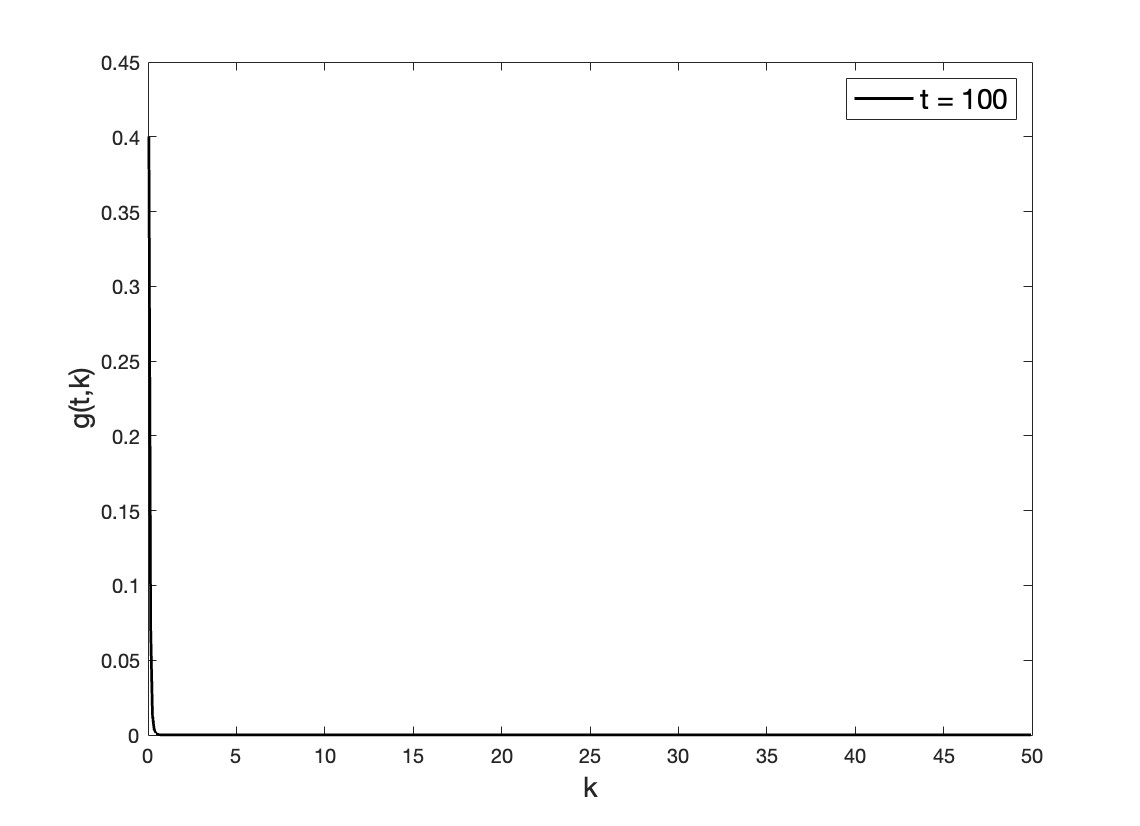}
\caption{\small Final Condition}\label{square_final}
\end{subfigure}
\caption{\small Test Case 3}
\label{square}
\end{figure}
	
	\par

	We then vary the truncation parameter and show the decay of the energy for $R = 25, 50, 80$ in Figure \ref{square_moments_trunc_param}.  As mentioned previously, we can now see that the amount of energy increases with the interval size, but, importantly, the rate of decay is the same for each truncation value.

	As in the previous tests, we explore varying the degree as seen in Figure \ref{square_decay}.  Here, as in the next case, we begin to see further contrasting behaviour as compared with the previous two test cases.  We see that the smaller the value of the degree, the longer the energy is conserved, but now, once the first blow-up time $t_1^*$ is reached, the rate of decay is larger for $\gamma = 3/2, 9/5$ as compared to $\gamma = 2$. To see this, we add a reference line corresponding to a rate of decay like $\mathcal{O}(\frac{1}{t})$ in Figures \ref{square_moments_trunc_param} and \ref{square_decay}.  It would appear that for smaller values of $\gamma$, the decay rate is better described by $\mathcal{O}(\frac{1}{t})$, but for $\gamma = 2$, the theoretical bound  $\mathcal{O}(\frac{1}{\sqrt{t}})$ (see \eqref{Decomposition4}), provides an excellent description of the cascade rate.
\begin{figure}
\centering
    \begin{subfigure}[b]{.49\linewidth}
    \includegraphics[width =\linewidth]{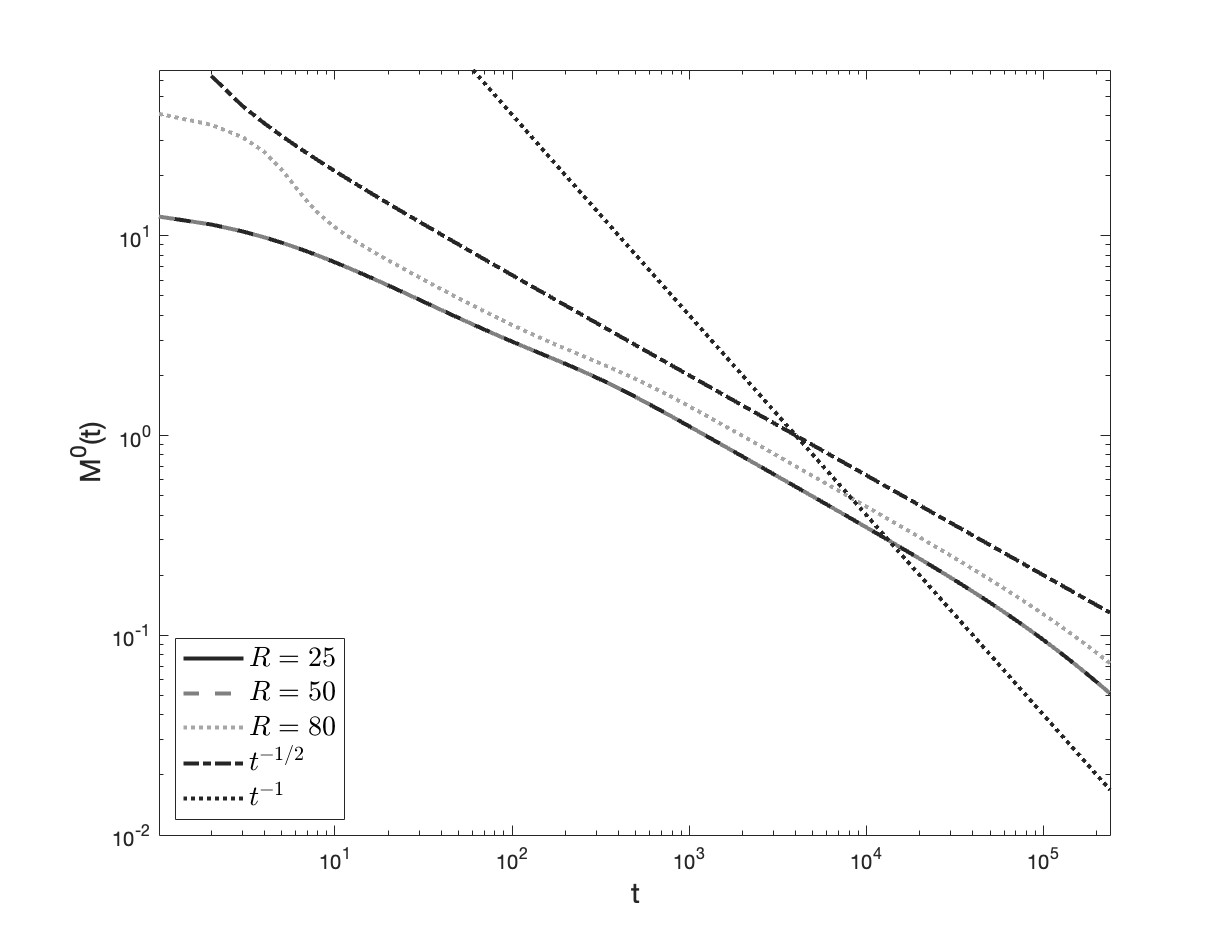}
 \caption{}\label{square_moments_trunc_param}
\end{subfigure}
	\begin{subfigure}[b]{.49\textwidth}	\includegraphics[width = \linewidth]{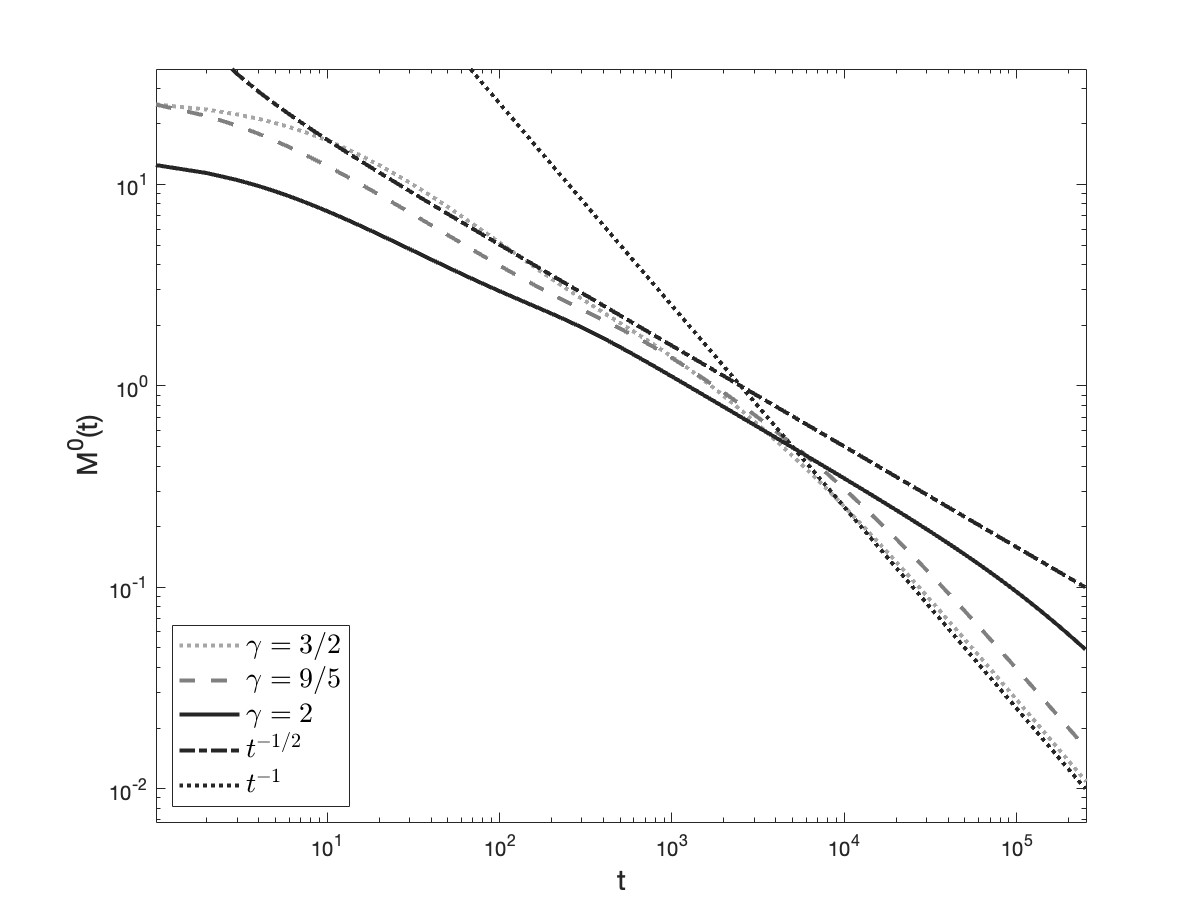}
	\caption{}\label{square_decay}
	\end{subfigure}
	\caption{\small (a) Zeroth moments of solution corresponding to initial condition \eqref{test_3_init}, with $\gamma = 2$ and allowing $R$ to vary. (b) Decay rate for initial condition \eqref{test_3_init} plotted against the theoretical rate.  The red and black lines are translated to match the intersection of the moments.}
\end{figure}

	
	\subsection{Test 4}\label{test4}
	Our last test has initial data given by
	\begin{equation}\label{test4_init}
		\begin{aligned}
			g_0(k) = 
			\frac{k - 2n\pi}{2\pi} && k\in [2n\pi, 2(n+1)\pi),
		\end{aligned}
	\end{equation}
	for $n\in \mathbb{N}_0$. As in Test 3, we set $h = 0.1$, $T = 100$ and $\Delta t = 0.0004$ when $R=25$ and $R = 50$ but $\Delta t = 0.00025$ for $R= 80$.
	\par
	We again give initial and final conditions in Figures \ref{saw_init} and \ref{saw_final}, respectively.  As in Test 3, we observe that the energy is accumulated to a frequency nearer to  $k=0$ in some finite time $T_s$, where it remains fixed with decaying $L^\infty$ norm. 
\begin{figure}
\centering
\begin{subfigure}[b]{.45\linewidth}
\includegraphics[width=\linewidth]{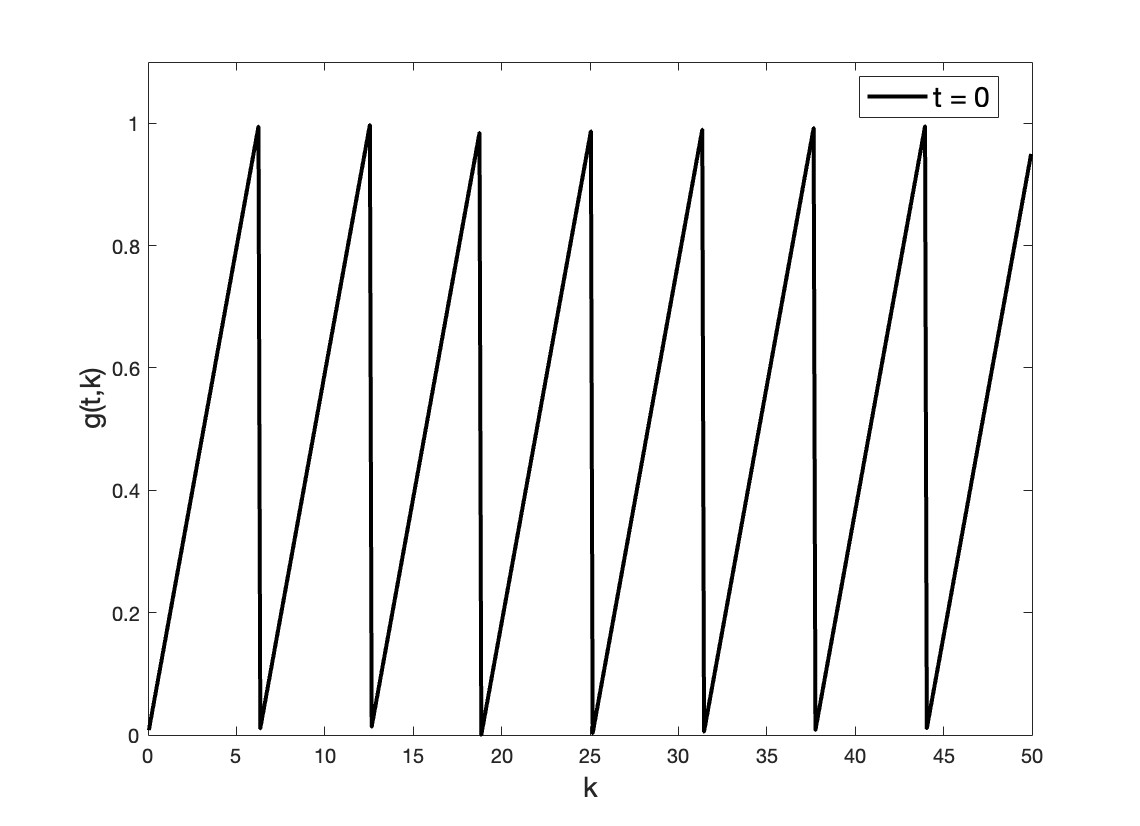}
\caption{\small Initial Condition}\label{saw_init}
\end{subfigure}
\begin{subfigure}[b]{.45\linewidth}
\includegraphics[width=\linewidth]{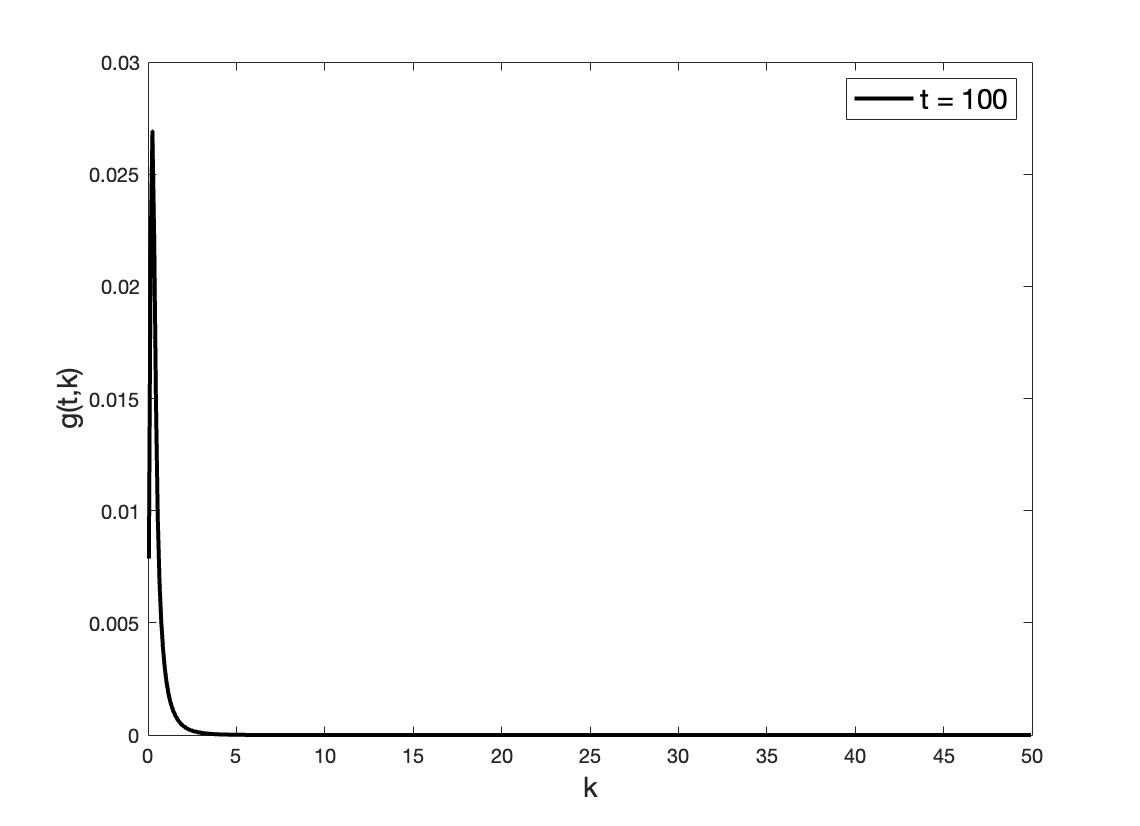}
\caption{\small Final Condition}\label{saw_final}
\end{subfigure}
\caption{\small Test Case 4}
\label{saw}
\end{figure}

	\par
	
	Now fixing $\gamma = 2$ once again, we see in Figure \ref{saw_moments_trunc_param} that varying the truncation parameter has a similar result on the total energy as in the previous test case.  By extending the interval, we see that a larger amount of energy is retained, but that the rate of decay is equal for all truncation values.  Further, we see a good fit with the theoretical decay rate.
	\begin{figure}
	\centering
	\begin{subfigure}[b]{.49\linewidth}
		\includegraphics[width = \linewidth]{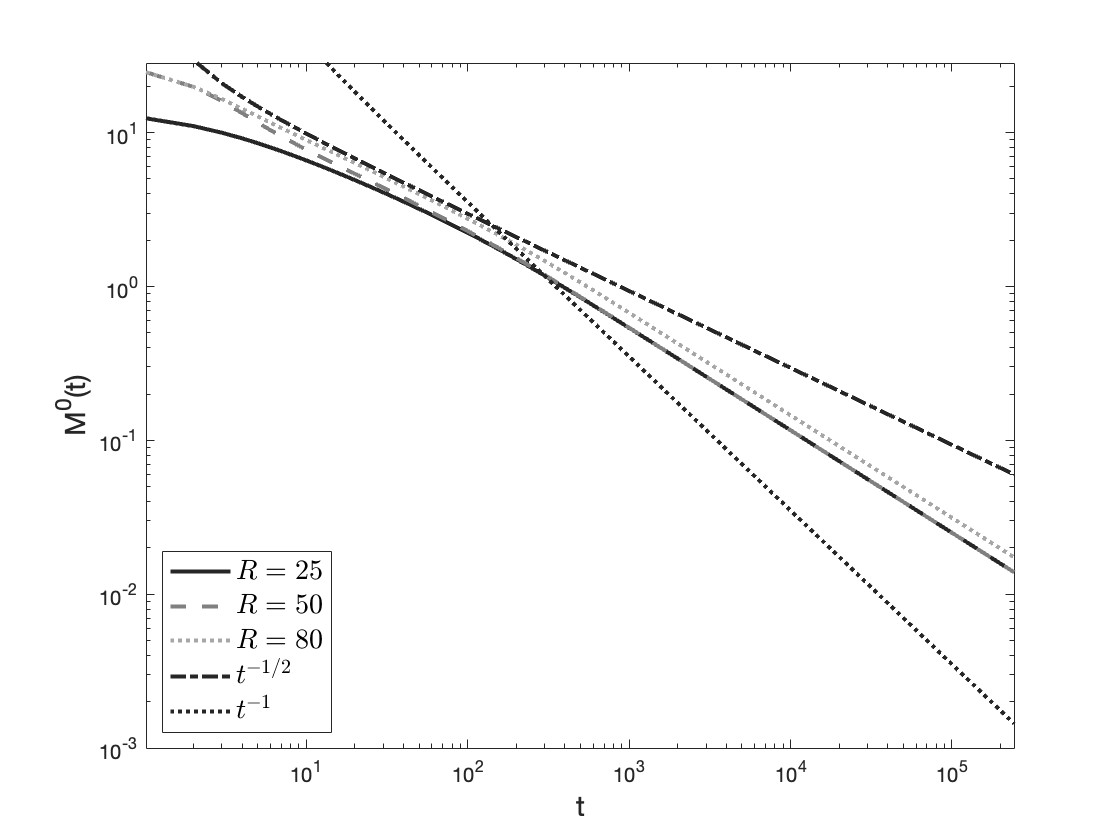}
			\caption{}	\label{saw_moments_trunc_param}
	\end{subfigure}
	\begin{subfigure}[b]{.49\linewidth}
	\includegraphics[width=\linewidth]{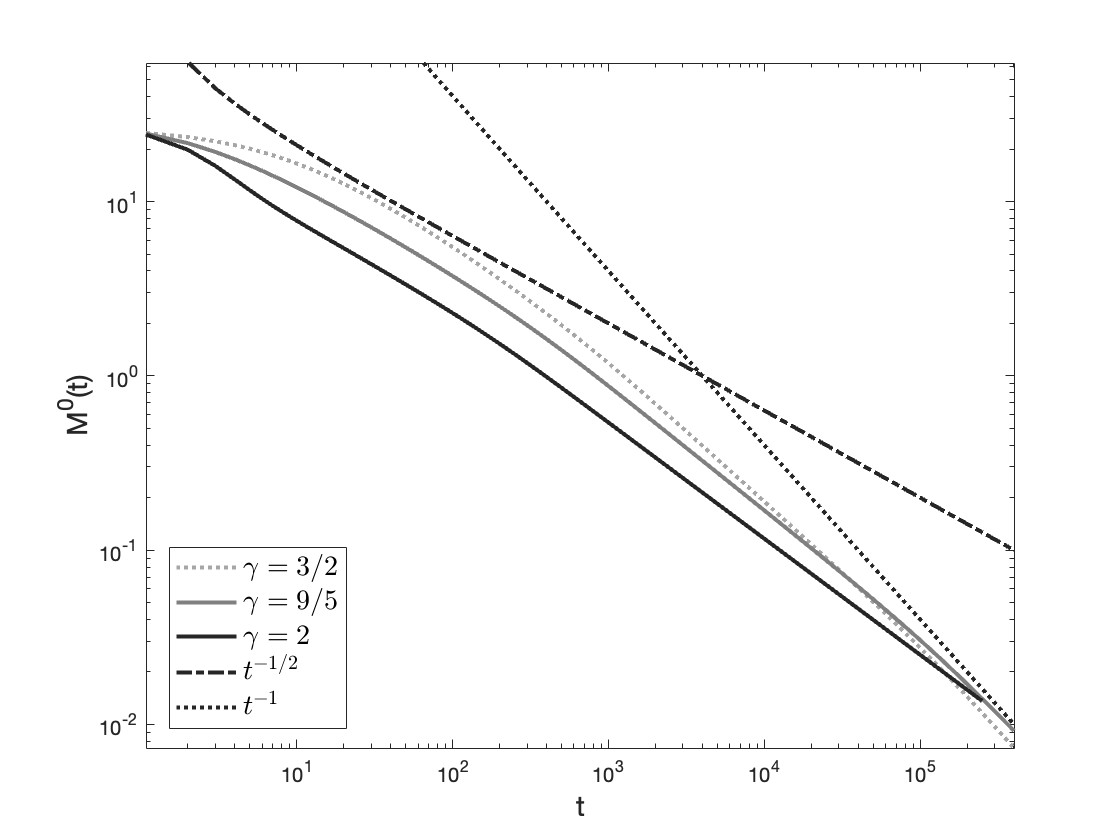}
			\caption{\small }
		\label{saw_moments_degree}
	\end{subfigure}
	\caption{(a) Zeroth moments of solution corresponding to initial condition \eqref{test4_init}, with $\gamma = 2$ and allowing $R$ to vary.  The theoretical decay rate is shown for comparison. (b) Zeroth moments of solution corresponding to initial condition \eqref{test4_init}, with $R = 50$ and allowing $\gamma$ to vary. The theoretical decay rate is shown for comparison.}
	\end{figure}
	\par

	\par
	Next, we let $\gamma =\frac{3}{2}, \frac{9}{5}, 2$ and keep $R = 50$ once again, and show the zeroth moments of these solutions in Figure \ref{saw_moments_degree}. Here, as for the previous test cases, for $\gamma = 3/2, 9/5$, we see that the energy is conserved for a longer amount of time when compared to $\gamma = 2$. As for the previous initial condition, it would appear that once the decay begins, the rates of decay are faster for $\gamma = 3/2, 9/5$ than for $\gamma = 2$.  We again observe that for $\gamma = 2$, the decay rate is more like $\mathcal{O}(\frac{1}{\sqrt{t}})$ and for the smaller values of $\gamma$, the decay rate is more like $\mathcal{O}(\frac{1}{t})$.

	\section{Conclusions and Further Discussion}\label{conc}

	We introduce a finite volume scheme which allows us to observe the long time asymptotics of the solutions of isotropic 3-wave kinetic equations, including the energy cascade behavior proved in \cite{soffer2019energy}. Our numerical algorithm is based on the combination of the identity represented in Lemma \ref{lemma:identity} and   Filbet and Lauren\c cot's scheme \cite{Fil04}   for the Smoluchowski coagulation equation.
	\par
	From the four numerical tests, we can see that the energy cascade behavior happens for $\gamma = \frac{3}{2}, \frac{9}{5}, 2$, and seem to verify the theory found in \cite{soffer2019energy}.  
	
	\par	
	
	From the solutions computed in sections \ref{test 1}, \ref{test 2}, \ref{test3}, and \ref{test4}, One can see that the smaller $\gamma$ is, the slower is the onset of decay. The energy cascade behavior also seems to occur independently of the smoothness of the initial data for all four cases with $\gamma=\frac{3}{2}, \frac{9}{5}, 2$. 
	
	\par 
	

The results in figure \ref{g1_moments_R} and \ref{gauss_moments_R} serve as another verification of the theory since the decay rate of the energy in any finite interval is the same due to the following fact proved in the main theorem of \cite{soffer2019energy}:
\begin{equation*}
 \int_{0}^R g(t,\omega)d\omega\ = \	\int_{\mathbb{R}_+}\chi_{[0,R]}(\omega)   g(t,\omega)d\omega\le \  \mathcal{O}\Big(\frac{1}{\sqrt{t}}\Big) \mbox{   as  } t\to\infty,
 \end{equation*}
 for all truncation parameters  R.
 
Therefore, the rate that the energy leaves any finite interval $[0,R]$ is the same since the convergence does not depend on the   truncation parameter.
As the theory is only for the energy cascade at the point $\omega=\infty$, it is a challenging task theoretically to obtain a convergence study pointwise with respect to all of  the velocity  variables in this $\omega$.

We would like to comment that in contrast to Tests 3 and 4, the amount of energy contained in the interval looks indistinguishable for the various truncation parameters in Tests 1 and 2.  This can be explained by comparing $\| g^0 \|_{L^1(0,R)}$ of the different test cases.  The initial energy serves as a kind of reservoir in the finite intervals \cite{Nazarenko:2011:WT}.  Then, for tests like Tests 3 and 4, we increase the amount of initial energy by increasing the size of the interval.  That is for  $R_1 < R_2 < R_3$ we have
\begin{equation}
    \| g^0 \|_{L^1(0,R_1)} < \| g^0 \|_{L^1(0,R_2)} < \| g^0 \|_{L^1(0,R3)}.
\end{equation}
This is not the case (or is negligible) for Tests 1 and 2, seen in figures \ref{g1_moments_R}, \ref{gauss_moments_R}, due to the initial conditions selected there.  The important thing to notice is that the slopes (decay rates) are independent of how much energy is contained in the interval initially, though the amount of energy can vary depending on the initial condition. 

The numerical results confirm the theoretical bound \eqref{Decomposition4} and show that the decay rate should be  $\mathcal{O}\big(\frac{1}{t^s}\big)$, with $s\in[\frac12, 1]$ for various initial data. In section \ref{test3}, the cascade rate is  described quite well by $\mathcal O\big(\frac{1}{\sqrt t}\big)$ for $\gamma = 2$. We then conclude that the cascade rate bound obtained in \cite{soffer2019energy} is optimal.

 
\section*{Ackowledgements}	
The authors would like to thank Prof. T. Hagstrom 
	for the use of SMU’s high-performance computing cluster ManeFrame II (M2), Prof. B. Rumpf for his many crucial suggestions and Prof. S. Nazarenko for several constructive suggestions to improve  numerical plots. The Los Alamos unlimited release number is LA-UR-23-20290.

\bibliographystyle{siamplain}

\bibliography{WTbib_short}
	
\end{document}